\numberwithin{equation}{subsection}
\theoremstyle{definition}
\newtheorem{theorem}[equation]{Theorem}
\newtheorem{lemma}[equation]{Lemma}
\newtheorem{proposition}[equation]{Proposition}
\newtheorem*{theorem1}{Theorem 1}
\renewcommand{\phi}{\varphi}
\newcommand{\I}{{\rm i}}
\newcommand{\E}{\mathrm{e}}
\newcommand{\ti}{\tilde}
\renewcommand{\(}{\bigl(}
\renewcommand{\)}{\bigr)\vphantom{)}}
\renewcommand{\equiv}{\;\>\Longleftrightarrow\;\>}
\newcommand{\equ}{$\;\>\Longleftrightarrow\;\>$}
\newcommand{\imply}{\;\;\;\Longrightarrow\;\;\;}
\newcommand{\imp}{$ \Longrightarrow $ }
\newcommand{\Atoms}{\operatorname{Atoms}}
\newcommand{\Down}{\operatorname{Down}}
\newcommand{\Up}{\operatorname{Up}}
\newcommand{\Cl}{\operatorname{Cl}}
\newcommand{\One}{{1\hskip-2.5pt{\rm l}}}
\newcommand{\Si}{\Sigma}
\newcommand{\si}{\sigma}
\newcommand{\Om}{\Omega}
\newcommand{\al}{\alpha}
\newcommand{\M}{\mathcal M}
\newcommand{\F}{\mathcal F}
\newcommand{\A}{\mathcal A}
\newcommand{\B}{\mathcal B}
\newcommand{\La}{\Lambda}
\newcommand{\Q}{\mathbf{Q}}
\newcommand{\Ex}{\mathbb E\,}
\newcommand{\R}{\mathbb R}
\renewcommand{\Pr}[1]{\mathbb{P}\mskip1.5mu\(\mskip1.5mu#1\mskip1.5mu\)}
\newcommand{\sif}{$\sigma$\nobreakdash-field}
\newcommand{\valued}[1]{$#1$\nobreakdash-\hspace{0pt}valued}
\newcommand{\almost}[1]{$#1$\nobreakdash-\hspace{0pt}almost}
\newcommand{\measurable}[1]{$#1$\nobreakdash-\hspace{0pt}measurable}
\begin{document}

\title{Noise as a Boolean algebra of \sif s. III.\\
 An old question of Jacob Feldman}

\author{Boris Tsirelson}

\date{}
\maketitle

\begin{abstract}
The noise-type completion $ C $ of a noise-type Boolean algebra $ B $
is generally not the same as the closure $ \ti B $ of $ B $. As shown
in Part I (Introduction, Theorem 2), $ C $ consists of all
\emph{complemented} elements of $ \ti B $. It appears that $ C = \ti B
$ if and only if $ B $ is classical (as defined in Part II, Sect.~1a),
which is the main result of this Part III.
\end{abstract}

\setcounter{tocdepth}{2}
\tableofcontents

\section*{Introduction}
\addcontentsline{toc}{section}{Introduction}
According to \cite{Ts1}, the noise-type completion $ C $ of a
noise-type Boolean algebra $ B $ consists of all complemented elements
of the closure of $ B $. (The monotonic closure $ \ti B $ and the
topological closure $ \Cl(B) $ are the same, see \cite[Sect.~1d,
especially 1d6 and 1d9]{Ts1}.) Are there any non-complemented elements
in $ \Cl(B) $? That is, whether $ C = \Cl(B) $, or not? This question
was not addressed in \cite{Ts1}, \cite{Ts2}.

The definition of a factored probability space, given by J.~Feldman
\cite{Fe} in 1971, is equivalent to our ``noise-type Boolean algebra''
with an additional requirement: $ B $ must be monotonically
closed. This is, $ B = C = \Cl(B) $. In that framework, Feldman asked
a question \cite[Problem 1.9]{Fe}: is every factored probability space
linearizable? His linearizability is equivalent to our classicality
\cite[Def.~1a2]{Ts2}. In our terms, Feldman gives a detailed
description of classical noise-type Boolean algebras, and leaves open
the question, whether the equality $ C = \Cl(B) $ implies
classicality, or not.

\begin{theorem1}\label{th1}
Let $ B $ be a noise-type Boolean algebra. Denote by $ C $ its
noise-type completion, and by $ \Cl(B) $ its closure. The following
two conditions are equivalent:

(a) $ C = \Cl(B) $;

(b) $ B $ is classical;

(c) $ \sup_n x_n \in C $ for all $ x_1,x_2,\dots \in B $.
\end{theorem1}

This is basically a remake of \cite[Sect.~6c/6.3]{Ts03}. The
implication (a) \imp (c) is trivial: $ \sup_n x_n = \lim_n
(x_1\vee\dots\vee x_n) \in \Cl(B) = C $ (but the converse implication
is nontrivial, since in general an element of $ \Cl(B) $ is not a
supremum of elements of $ B $). The implication (b) \imp (a), proved
in Section \ref{sec:1}, is relatively easy. The implication (c) \imp
(b), proved in Sections \ref{sec:2} and \ref{sec:3}, is more
difficult.

\section[The easy direction]{\raggedright The easy direction}
\label{sec:1}
\subsection{Preliminaries}
\label{1a}

Type $ L_2 $ subspaces of $ H = L_2(\Om,\F,P) $ are defined in
\cite[Sect.~1a]{Ts1}. For every (linear, closed) subspace $ h \subset
H $, the subspace $ \ti h = L_2 \( \si(h) \) $ is the type $ L_2 $
subspace generated by $ h $. Here $ \si(h) $ denotes the
\sif\ generated by $ h $ (that is, by all elements of $ h $).

Let $ h, h_1, h_2, \dots $ be subspaces of $ H $, and $ \ti h, \ti
h_1, \ti h_2, \dots $ the corresponding type $ L_2 $ subspaces; claim:
\begin{equation}\label{1a1}
\text{if } h_n \uparrow h \text{ then } \ti h_n \uparrow \ti h \, .
\end{equation}
Here $ h_n \uparrow h $ means that $ h_1 \subset h_2 \subset \dots $
and $ h $ is the closure of $ \cup_n h_n $; similarly, $ \ti h_n
\uparrow \ti h $ means that $ \ti h_1 \subset \ti h_2 \subset \dots $
and $ \ti h $ is the closure of $ \cup_n \ti h_n $. The claim holds
since $ \si(h_n) \uparrow \si(h) $ implies $ L_2 \( \si(h_n) \)
\uparrow L_2 \( \si(h) \) $ (as noted in \cite[proof of 1d2]{Ts1}).

Likewise, $ \si(h_n) \downarrow \si(h) $ implies $ L_2 \( \si(h_n) \)
\downarrow L_2 \( \si(h) \) $, but nevertheless, the relation $ h_n
\downarrow h $ (that is, $ h_1 \supset h_2 \supset \dots $ and $ h =
\cap_n h_n $) does not imply $ \ti h_n \downarrow \ti h $. A
counterexample: $ H = L_2(0,2\pi) $, and $ h_n $ is spanned by the
functions $ t \mapsto \E^{\I kt} $ for $ k = n,n+1,\dots $; then $ h_n
\downarrow \{0\} $, however, $ \ti h_n = H $ for all $ n $, since $
\ti h_n $ contains the function $ t \mapsto \overline{ \E^{\I nt} }
\cdot \E^{\I(n+k)t} = \E^{\I kt} $.

Another remark. Let $ X,X_1,X_2,\dots $ and $ Y,Y_1,Y_2,\dots $ are
random variables on a given probability space, and for every $ n $ the
two random variables $ X_n, Y_n $ are independent; claim:
\begin{equation}\label{1a2}
\text{if } X_n \to X, \> Y_n \to Y \text{ in probability then } X,Y
\text{ are independent.}
\end{equation}
Proof: if $ f,g : \R \to \R $ are bounded continuous functions then $
\Ex(f(X_n)) \to \Ex(f(X)) $, $ \Ex(g(Y_n)) \to \Ex(g(Y)) $,  $
\Ex(f(X_n)) \Ex(g(Y_n)) = \Ex(f(X_n)g(Y_n)) \to \Ex(f(X)g(Y)) $, thus,
$ \Ex(f(X)g(Y)) = \Ex(f(X)) \Ex(g(Y)) $. The same holds for
vector-valued random variables.

\subsection{Reminder}
\label{1b}

Let $ B $ be a noise-type Boolean algebra \cite[Def.~2a1]{Ts1} of \sif
s on a probability space $ (\Om,\F,P) $. Recall that
\begin{gather}
B \text{ is a sublattice of a complete lattice } \La \, ; \\
\La \text{ is endowed with a topology} \, ; \\
x_n \downarrow x \text{ implies } x_n \to x \, ; \; \text{ also }
 x_n \uparrow x \text{ implies } x_n \to x \, ; \quad
 \text{(see \cite[1d2]{Ts1})} \label{1b3}
\end{gather}
here $ x, x_1, x_2, \dots \in \La $, and
\begin{gather*}
x_n \downarrow x \quad \text{means} \quad x_1 \ge x_2 \ge \dots \text{
  and } \inf_n x_n = x \, , \\
x_n \uparrow x \quad \text{means} \quad x_1 \le x_2 \le \dots \text{
  and } \sup_n x_n = x \, .
\end{gather*}
Thus,
\begin{gather}
\text{every monotone sequence of } x_n \in \La \text{ converges in }
 \La \, ; \label{1b4} \\
\text{every monotone sequence of } x_n \in \Cl(B) \text{ converges in
} \Cl(B) \, ; \label{1b5}
\end{gather}
here $ \Cl(B) $ is the closure of $ B $ in $ \La $.

Recall the noise-type completion $ C $ of $ B $;
\begin{equation}\label{1b6}
B \subset C \subset \Cl(B) \, , 
\end{equation}
and $ C $ is the greatest noise-type Boolean algebra satisfying
\eqref{1b6}. Also,
\begin{equation}\label{1b7}
C \text{ is the set of all complemented elements of } \Cl(B) \, ;
\end{equation}
here $ x \in \Cl(B) $ is called complemented, if there exists
(necessarily unique) $ x' \in \Cl(B) $ (called the complement of $ x
$) satisfying $ x \wedge x' = 0 $ and $ x \vee x' = 1 $.

Let $ x,y \in \La $ be independent and $ u \le x $, $ v \le y $;
claim:
\begin{equation}\label{1b8}
\text{if } u \vee v = 1 \quad \text{then } u=x, \, v=y \, .
\end{equation}
This is a simple special case of \cite[1d13]{Ts1}.

Recall also \sif s $ \F_x $ \cite[1a]{Ts1}, projections $ Q_x : H \to
H $ \cite[1d]{Ts1}, and the first chaos space $ H^{(1)} \subset H $
\cite[1a2]{Ts2}. The space $ H^{(1)} $ is invariant under projections
$ Q_x $ for $ x \in \Cl(B) $ (since $ Q_{x\vee y} Q_z \psi = Q_z
Q_{x\vee y} \psi = Q_z ( Q_x \psi + Q_y \psi ) = ( Q_x + Q_y ) Q_z
\psi $).

\subsection{Proof that (b) implies (a)}
\label{1c}

We assume that $ B $ is classical and prove that $ C = \Cl(B) $, thus
proving the implication (b) \imp (a) of Theorem 1.

We denote by $ \Down(x) $, for $ x \in \Cl(B) $, the restriction of $
Q_x $ to $ H^{(1)} $ (treated as an operator $ H^{(1)} \to H^{(1)}
$). Clearly, $ x \le y $ implies $ \Down(x) \le \Down(y) $. By
\cite[(1a3)]{Ts2},
\begin{equation}\label{1c1}
\Down(x) + \Down(x') = \One \quad \text{for } x \in B \, .
\end{equation}

We denote by $ \Q $ the closure of $ \{ \Down(x) : x \in B \} $ in the
strong operator topology; $ Q $ is a closed set of commuting
projections on $ H^{(1)} $; we have $ \Down(x) \in \Q $ for $ x \in B
$, and by continuity for $ x \in \Cl(B) $ as well.

Note that $ q \in \Q $ implies $ \One-q \in \Q $ (since $ \Down(x_n)
\to q $ implies $ \Down(x'_n) \to \One-q $).

We define $ \Up(q) $, for $ q \in \Q $, as $ x \in \La $ such that $
\F_x = \si(qH^{(1)}) $ (the \sif\ generated by $ q\psi $ for all $
\psi \in H^{(1)} $). Clearly, $ q_1 \le q_2 $ implies $ \Up(q_1) \le
\Up(q_2) $. Also,
\begin{equation}\label{1c2}
\Up(q) \vee \Up(\One-q) = 1 \quad \text{for } q \in Q \, ,
\end{equation}
since $ qH^{(1)} + (\One-q)H^{(1)} = H^{(1)} $, and $ \si(H^{(1)}) = \F_1
$ by the classicality.

\begin{lemma}\label{1c3}
$ \Up(q) $ and $ \Up(\One-q) $ are independent (for each $ q \in \Q $).
\end{lemma}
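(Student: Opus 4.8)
The plan is to unfold the asserted independence of $ \Up(q) $ and $ \Up(\One-q) $ into the independence of the \sif s $ \si(qH^{(1)}) $ and $ \si((\One-q)H^{(1)}) $ that they generate, and then to obtain the latter by approximating $ q $ from within $ \{\Down(x):x\in B\} $, where complementary independence is built in. The governing reduction is that two \sif s are independent as soon as every finite family of generators of the first is jointly independent of every finite family of generators of the second; so it suffices to show, for all $ \psi_1,\dots,\psi_k\in H^{(1)} $, that the random vector $ (q\psi_1,\dots,q\psi_k) $ is independent of the random vector $ ((\One-q)\psi_1,\dots,(\One-q)\psi_k) $.

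First I would choose a sequence $ x_n\in B $ with $ \Down(x_n)\to q $ in the strong operator topology, available because $ q\in\Q $. By \eqref{1c1} the complementary projections satisfy $ \Down(x_n')=\One-\Down(x_n)\to\One-q $ strongly as well. For each fixed $ n $ the \sif s $ \F_{x_n} $ and $ \F_{x_n'} $ are independent, this being part of the noise-type structure of $ B $; moreover $ \Down(x_n)\psi=Q_{x_n}\psi $ is \measurable{\F_{x_n}} while $ \Down(x_n')\psi=Q_{x_n'}\psi $ is \measurable{\F_{x_n'}} for every $ \psi\in H^{(1)} $. Hence the vectors $ X_n=(\Down(x_n)\psi_1,\dots,\Down(x_n)\psi_k) $ and $ Y_n=(\Down(x_n')\psi_1,\dots,\Down(x_n')\psi_k) $ are independent for each $ n $.

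Next I would pass to the limit. Strong convergence gives $ \Down(x_n)\psi_i\to q\psi_i $ and $ \Down(x_n')\psi_i\to(\One-q)\psi_i $ in $ H=L_2 $, hence in probability; thus $ X_n\to X:=(q\psi_1,\dots,q\psi_k) $ and $ Y_n\to Y:=((\One-q)\psi_1,\dots,(\One-q)\psi_k) $ in probability. The vector-valued form of \eqref{1a2} then makes $ X $ and $ Y $ independent. Applying this to a common enlarged family $ \psi_1,\dots,\psi_N $ shows that any finite family from $ qH^{(1)} $ is jointly independent of any finite family from $ (\One-q)H^{(1)} $, so $ \si(qH^{(1)}) $ and $ \si((\One-q)H^{(1)}) $ are independent; that is, $ \Up(q) $ and $ \Up(\One-q) $ are independent.

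I expect the delicate point to be the reduction stated first, namely inferring independence of the full generated \sif s from independence of all their finite-dimensional marginals. This is the standard $ \pi $\nobreakdash-system argument, since the cylinder events built from finitely many generators form a $ \pi $\nobreakdash-system generating each \sif; it is precisely this step that forces the use of the vector-valued version of \eqref{1a2} rather than its scalar special case, the latter giving only pairwise independence of individual random variables.
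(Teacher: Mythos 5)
Your proof is correct and follows essentially the same route as the paper: approximate $q$ by $\Down(x_n)$ with $x_n\in B$, use the built-in independence of $\F_{x_n}$ and $\F_{x'_n}$ to get independence of the finite-dimensional vectors, and pass to the limit via the vector-valued form of \eqref{1a2}. The only (harmless) cosmetic difference is that you take a common family $\psi_1,\dots,\psi_k$ for both sides and then enlarge it, whereas the paper allows two separate families $\psi_1,\dots,\psi_k$ and $\xi_1,\dots,\xi_l$ from the start.
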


\begin{proof}
We take $ x_n \in B $ such that $ \Down(x_n) \to q $, then $
\Down(x'_n) \to \One-q $. We have to prove that $ \si(qH^{(1)}) $ and $
\si((\One-q)H^{(1)}) $ are independent, that is, two random vectors $ \(
q\psi_1,\dots,q\psi_k \) $ and $ \( (\One-q)\xi_1,\dots,(\One-q)\xi_l \) $
are independent for all $ k,l $ and all $
\psi_1,\dots,\psi_k,\xi_1,\dots,\xi_l \in H^{(1)} $. It follows by
\eqref{1a2} from the similar claim for $ \Down(x_n) $ in place of $ q
$.
\end{proof}

\begin{lemma}\label{1c4}
$ \Up(\Down(x)) = x $ for every $ x \in B $.
\end{lemma}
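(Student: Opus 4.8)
The plan is to show that $\Up(\Down(x)) = x$ for $x \in B$ by proving the two inequalities $\Up(\Down(x)) \le x$ and $x \le \Up(\Down(x))$ separately, exploiting the independence already established in Lemma~\ref{1c3}. First I would unwind the definitions. By definition $\F_{\Up(\Down(x))} = \si\(\Down(x) H^{(1)}\) = \si\(Q_x H^{(1)}\)$, where I use that $\Down(x)$ is just the restriction of $Q_x$ to the first chaos. Since $Q_x$ maps $H$ into $L_2(\F_x)$, its restriction to $H^{(1)}$ lands in $L_2(\F_x)$, so the \sif\ generated by $Q_x H^{(1)}$ is contained in $\F_x$. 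This gives the \sif\ inclusion $\F_{\Up(\Down(x))} \subset \F_x$, hence $\Up(\Down(x)) \le x$.

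For the reverse inequality $x \le \Up(\Down(x))$, the idea is to set $q = \Down(x)$, note $\One - q = \Down(x')$ by \eqref{1c1}, and apply the machinery around complements. Writing $u = \Up(\Down(x)) = \Up(q)$ and $v = \Up(\One-q) = \Up(\Down(x'))$, we have by the inclusion just proved that $u \le x$ and (applying the same argument to $x'$) $v \le x'$. By \eqref{1c2} we have $u \vee v = 1$, and by Lemma~\ref{1c3} the elements $u$ and $v$ are independent. Now $x$ and $x'$ are independent (as complementary elements of the classical $B$), and $u \le x$, $v \le x'$. The plan is to invoke \eqref{1b8}: since $u \vee v = 1$ with $u \le x$ and $v \le x'$ for the independent pair $x, x'$, we conclude $u = x$ and $v = x'$. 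That immediately yields $\Up(\Down(x)) = u = x$, which is exactly the claim.

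The main obstacle I anticipate is justifying the \sif\ inclusion $\si\(Q_x H^{(1)}\) \subset \F_x$ cleanly — that is, confirming that the first-chaos projection $Q_x$ genuinely takes values measurable with respect to $\F_x$, so that $\Up(\Down(x)) \le x$ holds at the level of the lattice $\La$ and not merely formally. This rests on the defining property of the projections $Q_x$ in \cite{Ts1} (each $Q_x$ is the conditional-expectation-type projection onto the type $L_2$ subspace $L_2(\F_x)$), together with the invariance of $H^{(1)}$ under $Q_x$ recalled at the end of Section~\ref{1b}. Once that inclusion is in hand for both $x$ and $x'$, the rest is a direct application of the independence (Lemma~\ref{1c3}), the covering relation \eqref{1c2}, and the rigidity statement \eqref{1b8}; no further analytic estimates are needed. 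I would therefore spend most of the proof making the inclusion precise and then close with the short lattice-theoretic argument.
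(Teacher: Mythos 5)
Your proof is correct and follows essentially the same route as the paper: establish $\Up(\Down(x)) \le x$ from the \measurable{\F_x}ity of $Q_x\psi$, do the same for $x'$, and then combine \eqref{1c2} with the rigidity statement \eqref{1b8} applied to the independent pair $x, x'$. The only cosmetic difference is that you invoke Lemma~\ref{1c3} along the way, which is not actually needed here --- the independence required by \eqref{1b8} is that of $x$ and $x'$, which holds for any complementary pair in $B$ (classicality is not needed for that step either).
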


\begin{proof}
Denote $ q = \Down(x) $, then $ \Down(x') = \One-q $. We have $ \Up(q)
\le x $ (since $ q\psi = Q_x \psi $ is \measurable{\F_x} for $ \psi
\in H^{(1)} $); similarly, $ \Up(\One-q) \le x' $. By \eqref{1c2} and
\eqref{1b8}, $ \Up(q) = x $.
\end{proof}

\begin{lemma}\label{1c5}
If $ q,q_1,q_2,\dots \in \Q $ satisfy $ q_n \uparrow q $ then $
\Up(q_n) \uparrow \Up(q) $.
\end{lemma}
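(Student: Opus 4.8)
The plan is to transport the statement from the projections $ q_n, q $ to the type $ L_2 $ subspaces they generate, and then to read off the conclusion from \eqref{1a1}; in effect the lemma is \eqref{1a1} carried across the correspondence $ \Up $. Monotonicity of $ \Up(q_n) $ is immediate: from $ q_n \le q_{n+1} $ and the implication $ q_1 \le q_2 \Rightarrow \Up(q_1) \le \Up(q_2) $ noted above, the sequence $ \Up(q_n) $ is increasing in $ \La $, so the only real task is to identify its supremum with $ \Up(q) $.

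For this I would first pass to ranges. Since $ q_n \uparrow q $ is a supremum of commuting projections, $ q $ projects $ H^{(1)} $ onto the closure of $ \cup_n q_n H^{(1)} $; hence the subspaces $ h_n := q_n H^{(1)} $ satisfy $ h_n \uparrow h := q H^{(1)} $ in the sense of \eqref{1a1}, and applying \eqref{1a1} gives $ \ti h_n \uparrow \ti h $ for the generated type $ L_2 $ subspaces. The crux is to recognize $ \ti h_n $ and $ \ti h $ as the type $ L_2 $ subspaces carried by $ \Up(q_n) $ and $ \Up(q) $: by the definition of $ \Up $ we have $ \F_{\Up(q_n)} = \si(q_n H^{(1)}) = \si(h_n) $, so $ L_2 \( \F_{\Up(q_n)} \) = L_2 \( \si(h_n) \) = \ti h_n $, and likewise $ L_2 \( \F_{\Up(q)} \) = \ti h $.

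Consequently $ \ti h_n \uparrow \ti h $ says exactly that the type $ L_2 $ subspaces of the $ \Up(q_n) $ increase (reconfirming monotonicity) and that their closed union is the type $ L_2 $ subspace of $ \Up(q) $. Because an element of $ \La $ is determined by its type $ L_2 $ subspace, with $ \le $ corresponding to inclusion and the join of an increasing chain of type $ L_2 $ subspaces being the closure of their union (itself an instance of \eqref{1a1}), this is precisely the assertion $ \sup_n \Up(q_n) = \Up(q) $, that is, $ \Up(q_n) \uparrow \Up(q) $. I expect the one point needing care to be this last translation back into $ \La $: matching the limiting type $ L_2 $ subspace with that of $ \Up(q) $ and invoking uniqueness of the correspondence, with \eqref{1b4} ensuring that the increasing sequence $ \Up(q_n) $ genuinely converges. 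The range behaviour of increasing commuting projections and the identity $ \si(h_n) = \F_{\Up(q_n)} $ are routine.
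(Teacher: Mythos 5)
Your argument is correct and is essentially the paper's: the whole content is that $q_n\uparrow q$ forces $q_nH^{(1)}\uparrow qH^{(1)}$ and hence $\si(q_nH^{(1)})\uparrow\si(qH^{(1)})$, which by the definition of $\Up$ is the claim. The paper states exactly this in one line, working directly with the generated \sif s; your detour through \eqref{1a1} and the type $L_2$ subspaces is harmless but not needed.
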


\begin{proof}
$ q_n H^{(1)} \uparrow qH^{(1)} $ implies $ \si(q_n H^{(1)}) \uparrow
\si(qH^{(1)}) $.
\end{proof}

\begin{lemma}\label{1c6}
If $ q,q_1,q_2,\dots \in \Q $ satisfy $ q_n \downarrow q $ then $
\Up(q_n) \downarrow \Up(q) $.
\end{lemma}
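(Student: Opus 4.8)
The plan is to avoid arguing directly with the \sif s, as was done for the upward case in Lemma~\ref{1c5}, and instead route the downward case through the independence result (Lemma~\ref{1c3}) and the rigidity property \eqref{1b8}. The reason a detour is needed is that the proof of Lemma~\ref{1c5} rests on the implication $ q_n H^{(1)} \uparrow qH^{(1)} \Rightarrow \si(q_n H^{(1)}) \uparrow \si(qH^{(1)}) $, whose downward analogue is false in general --- this is precisely the phenomenon exhibited by the counterexample in Section~\ref{1a}.

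First I would record the monotonicity. Since $ \Up $ is monotone and $ q_n \downarrow q $, the sequence $ \Up(q_n) $ is nonincreasing; being a monotone sequence in the complete lattice $ \La $ it converges, say $ \Up(q_n) \downarrow y $ with $ y = \inf_n \Up(q_n) \in \La $. From $ q \le q_n $ we get $ \Up(q) \le \Up(q_n) $ for all $ n $, hence $ \Up(q) \le y $; it remains to prove $ y \le \Up(q) $. To that end I would pass to complements: put $ q' = \One-q $ and $ q'_n = \One-q_n $, which lie in $ \Q $, and note that $ q_n \downarrow q $ gives $ q'_n \uparrow q' $, so Lemma~\ref{1c5} yields $ \Up(q'_n) \uparrow \Up(q') $.

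The crux, and the step I expect to be the main obstacle, is to show that $ y $ and $ \Up(q') $ are independent --- here the naive \sif\ argument is unavailable, and what makes it work is the asymmetry between the two factors. On one side, $ y \le \Up(q_n) $ makes every bounded $ \F_y $-measurable $ f $ also $ \F_{\Up(q_n)} $-measurable, for every $ n $ (the ``intersection side''). On the other side, $ \F_{\Up(q'_n)} \uparrow \F_{\Up(q')} $ lets me approximate any bounded $ \F_{\Up(q')} $-measurable $ g $ by $ g_n = \cE{g}{\F_{\Up(q'_n)}} $, which converges to $ g $ in $ L_2 $ by martingale convergence (the ``union side''). For each $ n $, Lemma~\ref{1c3} gives independence of $ \Up(q_n) $ and $ \Up(q'_n) $, whence $ \Ex(f g_n) = \Ex(f)\,\Ex(g_n) $; letting $ n \to \infty $ (using that $ f $ is bounded) yields $ \Ex(fg) = \Ex(f)\,\Ex(g) $, so $ y $ and $ \Up(q') $ are independent. (Alternatively one can invoke \eqref{1a2}, or run a monotone-class argument over the generating algebra $ \cup_n \F_{\Up(q'_n)} $.)

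Finally I would apply \eqref{1b8} to the independent pair $ y, \Up(q') $: since $ \Up(q) \le y $, $ \Up(q') \le \Up(q') $, and $ \Up(q)\vee\Up(q') = 1 $ by \eqref{1c2} (so also $ y \vee \Up(q') = 1 $), the rigidity \eqref{1b8} forces $ \Up(q) = y $. Combined with the monotonicity recorded above, this is exactly $ \Up(q_n) \downarrow \Up(q) $, as required. The whole point is that separating the limit into an intersection-side element $ y $ (measurable at every level) and a union-side element $ \Up(q') $ (approximable from below) is what lets independence survive the double limit where a direct \sif\ argument cannot.
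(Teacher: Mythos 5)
Your proposal is correct and follows essentially the same route as the paper's proof: identify the limit $y=\inf_n\Up(q_n)\ge\Up(q)$, deduce independence of $y$ and $\Up(\One-q)$ from Lemma~\ref{1c3} together with $\Up(\One-q_n)\uparrow\Up(\One-q)$ (Lemma~\ref{1c5}), and conclude via \eqref{1c2} and \eqref{1b8}. The only difference is that you spell out the martingale/limit argument for passing independence to the limit, which the paper leaves implicit.
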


\begin{proof}
By \eqref{1b4}, $ \Up(q_n) \downarrow x $ for some $ x \in \La $, $ x \ge
\Up(q) $. By \ref{1c3}, $ \Up(q_n) $ and $ \Up(\One-q_n) $ are independent;
thus, $ x $ and $ \Up(\One-q_n) $ are independent for all $ n $. By
\ref{1c5}, $ \Up(\One-q_n) \uparrow \Up(\One-q) $. Therefore $ x $ and $
\Up(\One-q) $ are independent. By \eqref{1c2} and \eqref{1b8}, $ \Up(q) =
x $.
\end{proof}

Now we are in position to prove that $ C = \Cl(B) $. By
\cite[1d7]{Ts1}, every $ x \in \Cl(B) $ is of the form
\[
x = \liminf_n x_n = \sup_n \inf_k x_{n+k}
\]
for some $ x_n \in B $. It follows that $ \Down(x) = \liminf_n
\Down(x_n) $; by \ref{1c5}, \ref{1c6}, $ \Up(\Down(x)) = \liminf_n
\Up(\Down(x_n)) $; using \ref{1c4} we get $ \Up(\Down(x)) = \liminf_n
x_n = x $.

\begin{sloppypar}
On the other hand, $ \One - \Down(x) = \limsup_n (\One-\Down(x_n)) =
\limsup_n \Down(x'_n) $ by \eqref{1c1}, thus the element $ y =
\Up(\One-\Down(x)) $ satisfies (by \ref{1c5}, \ref{1c6} and \ref{1c4}
again) $ y = \limsup_n \Up(\Down(x'_n)) = \limsup_n x'_n \in \Cl(B)
$.
\end{sloppypar}

By \ref{1c3}, $ x $ and $ y $ are independent. By \eqref{1c2}, $ x
\vee y = 1 $. Therefore, $ y $ is the complement of $ x $ in $ \Cl(B)
$, and we conclude that $ x \in C $. Thus, $ C = \Cl(B) $.

\section[The difficult direction: nonspectral arguments]{\raggedright
  The difficult direction: nonspectral arguments}
\label{sec:2}
\subsection{Preliminaries: finite Boolean algebras}
\label{2a}

Every finite Boolean algebra $ b $ has $ 2^n $ elements, where $ n $
is the number of the atoms $ a_1,\dots,a_n $ of $ b $; these atoms
are pairwise disjoint, and $ { a_1 \vee \dots \vee a_n = 1 }
$. All elements of $ b $ are of the form
\begin{equation}\label{2a1}
a_{i_1} \vee \dots \vee a_{i_k} \, , \quad 1 \le i_1 < \dots < i_k \le
n \, .
\end{equation}
We denote by $ \Atoms(b) $ the set of all atoms of $ b $, and rewrite
\eqref{2a1} as
\begin{equation}\label{2a2}
\forall x \in b \qquad x = \bigvee_{a\in\Atoms(b), a \le x} a \, .
\end{equation}

Let $ B $ be a Boolean algebra, $ b_1,b_2 \subset B $ two finite
Boolean subalgebras, and $ b \subset B $ the Boolean subalgebra
generated by $ b_1 $, $ b_2 $. Then $ b $ is finite. If $ a_1 \in
\Atoms(b_1) $, $ a_2 \in \Atoms(b_2) $ and $ a_1 \wedge a_2 \ne 0 $
then $ a_1 \wedge a_2 \in \Atoms(b) $, and all atoms of $ b $ are of
this form.

\subsection{A random supremum}
\label{2b}

In order to effectively use Condition (c) of Theorem 1 we
choose an increasing sequence $ (x_n)_n $, $ x_n \in B $, whose
supremum is unlike to belong to $ C $. Ultimately it will be proved
that $ \sup_n x_n \in C $ only if $ B $ is classical.

However, we do not construct $ (x_n)_n $ explicitly. Instead we use
probabilistic method: construct a random sequence that has the needed
property with a non-zero probability.

Our noise-type Boolean algebra $ B $ consists of sub-\sif s on a
probability space $ (\Om,\F,P) $. However, randomness of $ x_n $ does
not mean that $ x_n $ is a function on $ \Om $. Another probability
space, unrelated to $ (\Om,\F,P) $, is involved. It may be thought of
as the space of sequences $ (x_n)_n $ endowed with a probability
measure described below.

Often, ``measure on a Boolean algebra $ b $'' stands for an additive
function $ b \to [0,\infty) $. However, the distribution of a random
element of $ b $ (assuming that $ b $ is finite) is rather a
probability measure $ \nu $ on the set of all elements of $ b $, that
is, an additive function $ \nu : 2^b \to [0,\infty) $, $ \nu(b) = 1
$. It boils down to a function $ b \to [0,\infty) $, $ x \mapsto
\nu(\{x\}) $, such that $ \sum_{x\in b} \nu(\{x\}) = 1 $.

Given a finite Boolean algebra $ b $ and a number $ p \in (0,1) $, we
introduce a probability measure $ \nu_{b,p} $ on the set of elements
of $ b $ by
\begin{equation}\label{2b1}
\nu_{b,p} ( \{ a_{i_1} \vee \dots \vee a_{i_k} \} ) = p^k (1-p)^{n-k}
\quad \text{for } 1 \le i_1 < \dots < i_k \le n
\end{equation}
(using the notation of \eqref{2a1}). That is, each atom is included
with probability $ p $, independently of others.

Given finite Boolean subalgebras $ b_1 \subset b_2 \subset \dots
\subset B $ and numbers $ p_1,p_2,\dots \in (0,1) $, we consider
probability measures $ \nu_n = \nu_{b_n,p_n} $ and their product, the
probability measure $ \nu = \nu_1 \times \nu_2 \times \dots $ on the
set $ b_1 \times b_2 \times \dots $ of sequences $ (x_n)_n $, $ x_n
\in b_n $. We note that $ \sup_n x_n \in \Cl(B) $ for all such
sequences and ask, whether
\begin{equation}\label{2b2}
\sup_n x_n \in C \quad \text{for \almost{\nu} all sequences } (x_n)_n
\, ,
\end{equation}
or not.

\begin{proposition}\label{2b3}
If \eqref{2b2} holds for all such $ b_1,b_2,\dots $ and $
p_1,p_2,\dots $ then $ B $ is classical.
\end{proposition}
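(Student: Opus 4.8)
The plan is to reduce complementedness of the random supremum to a single lattice identity, and then to prove the proposition by contraposition. Put $ s_n = x_1 \vee \dots \vee x_n \in b_n $ and let $ s'_n \in b_n $ be the complement of $ s_n $ in $ b_n $. Then $ s_n \uparrow X $ and $ s'_n \downarrow Y $, where $ X = \sup_n x_n $ and $ Y = \inf_n s'_n \in \Cl(B) $ (monotone limits, by \eqref{1b5}). Since $ s_n $ and $ s'_n $ are complementary in the finite algebra $ b_n $ they are independent, and letting $ n \to \infty $ exactly as in Lemma \ref{1c3} (via \eqref{1a2}) shows that $ X $ and $ Y $ are independent; in particular $ X \wedge Y = 0 $. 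Passing to the limit in \eqref{1c1} gives $ \Down(X) + \Down(Y) = \One $.

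Next I would prove the equivalence: $ X \in C $ if and only if $ X \vee Y = 1 $. If $ X \vee Y = 1 $, then (since $ X \wedge Y = 0 $) $ Y $ is a complement of $ X $, so $ X \in C $ by \eqref{1b7}. Conversely, if $ X \in C $ with complement $ Z \in C $, then for each $ n $ we have $ Z \wedge s_n \le Z \wedge X = 0 $, so $ Z \le s'_n $ (complementation in the Boolean algebra $ C $), whence $ Z \le \inf_n s'_n = Y $ and $ 1 = X \vee Z \le X \vee Y $. Thus Condition \eqref{2b2} is equivalent to the assertion that $ X \vee Y = 1 $ for \almost{\nu} all sequences.

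Recall that classicality means $ \si(H^{(1)}) = \F_1 $; let $ w \in \La $ satisfy $ \F_w = \si(H^{(1)}) $, so that classicality is the equality $ w = 1 $. I would first note that $ w \le X \vee Y $ for \emph{every} sequence: since $ \Down(X) + \Down(Y) = \One $, the projection $ \Down(Y) = \One - \Down(X) $ is orthogonal to $ \Down(X) $ and $ \Down(X) \vee \Down(Y) = \One $; by monotonicity of $ \Down $ this forces $ \Down(X \vee Y) = \One $, i.e.\ $ H^{(1)} $ consists of \measurable{\F_{X\vee Y}} elements, so $ \si(H^{(1)}) \subset \F_{X\vee Y} $ and $ w \le X \vee Y $. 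Hence if $ B $ is classical then trivially $ X \vee Y \ge w = 1 $; the content of the proposition is the converse, which I would establish by contraposition. Assuming $ w \ne 1 $, I would construct finite subalgebras $ b_1 \subset b_2 \subset \dots $ and probabilities $ p_1,p_2,\dots \in (0,1) $ with $ \Pr{X \vee Y \ne 1} > 0 $; the algebras $ b_n $ should increase so as to resolve all of $ H^{(1)} $ in the limit (saturating the first-chaos part, as above) yet be fine enough to resolve a fixed witness of the strict inequality $ w < 1 $.

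The core, and the step I expect to be hardest, is the probabilistic estimate that, for a suitable choice of $ b_n $ and $ p_n $, the random element $ X \vee Y $ differs from $ 1 $ with positive probability. Concretely I would fix a bounded \measurable{\F_1} random variable $ \xi $ that is not \measurable{\si(H^{(1)})}, and try to show that with positive probability $ \xi $ fails to be \measurable{\F_{X\vee Y}}, which forces $ X \vee Y \ne 1 $. The obstacle is to control, as the atoms of $ b_n $ are included independently at rate $ p_n $, how much of the non-classical (higher-chaos) information is absorbed into $ \F_{X\vee Y} $, and to show that a residual part survives with positive probability. The purely lattice-theoretic and independence arguments of this section do not by themselves deliver such a quantitative estimate; this is exactly where the spectral analysis of the higher-chaos components in Section \ref{sec:3} enters.
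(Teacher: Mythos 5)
Your first two paragraphs reproduce, essentially correctly, the reduction that the paper itself performs in Section \ref{2c}: writing $ X = \sup_n x_n $ and $ Y = \lim_n (x_1\vee\dots\vee x_n)' $, one has $ X \wedge Y = 0 $ and $ X \in C $ if and only if $ X \vee Y = 1 $ (this is Proposition \ref{pr1} together with Lemmas \ref{le1} and \ref{le2}), so that \eqref{2b2} becomes \eqref{2c8} and Proposition \ref{2b3} becomes Proposition \ref{2c5}. Up to that point you are in step with the paper, and your auxiliary observations (independence of $X$ and $Y$, $ \Down(X)+\Down(Y)=\One $, hence $ \si(H^{(1)}) \subset \F_{X\vee Y} $) are sound.

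But the proof stops there. The entire substance of the proposition is the implication you defer: that non-classicality permits a choice of $ b_1 \subset b_2 \subset \dots $ and $ p_1,p_2,\dots $ for which \eqref{2c8} fails on a set of positive $\nu$-measure. You say explicitly that the lattice-theoretic and independence arguments ``do not by themselves deliver such a quantitative estimate'' and that the spectral analysis ``enters'' at this point --- but that analysis \emph{is} the proof, and none of it is supplied. Moreover, the route you sketch (fix a witness $ \xi \in L_2(\F_1) $ not measurable over $ \si(H^{(1)}) $ and show it survives into $ \F_{X\vee Y} $ with positive probability) is not how the paper proceeds and is not obviously workable; the paper never directly estimates the probability that $ X \vee Y \ne 1 $ in terms of a non-classical witness. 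Instead it factors the claim through the spectral counting function $ K = \sup_b K_b $ on the spectrum $ S $: Proposition \ref{3b1} shows that the hypothesis forces $ K < \infty $ almost everywhere (this is where the randomization does its work, via the parameter choices \eqref{3c1}, \eqref{3c2} and Lemmas \ref{3c6}--\ref{3c8}), and Theorem \ref{3b2} shows that $ K < \infty $ a.e.\ implies classicality (via the chaos decomposition $ H = \oplus_k H^{(k)} $ and Propositions \ref{3d1}, \ref{3d2}). The definition of $ K $, the dichotomy \eqref{3c4}, the probabilistic lemmas, and the chaos-decomposition argument are all absent from the proposal, so what you have is a correct restatement of the problem rather than a proof.
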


Condition (c) of Theorem 1 ensures that \eqref{2b2} holds
always. In order to prove the implication (c) \imp (b) (thus
completing the proof of Theorem 1) it is sufficient to prove
Proposition \ref{2b3}.

\subsection{The complement and the tail}
\label{2c}

Let $ x \in \Cl(B) $ be such that $ x_n \uparrow x $ for some $ x_n
\in B $.

\begin{proposition}\label{pr1}
The following five conditions on $ x $ are equivalent.

(a) $ x \in C $;

(b) $ x \vee \lim_n x'_n = 1 $ for some $ x_n \in B $ satisfying $ x_n
\uparrow x $;

(c) $ x \vee \lim_n x'_n = 1 $ for all $ x_n \in B $ satisfying $ x_n
\uparrow x $;

(d) $ \lim_m \lim_n ( x_m \vee x'_n ) = 1 $ for some $ x_n \in B $
satisfying $ x_n \uparrow x $;

(e) $ \lim_m \lim_n ( x_m \vee x'_n ) = 1 $ for all $ x_n \in B $
satisfying $ x_n \uparrow x $.
\end{proposition}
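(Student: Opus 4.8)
The plan is to prove the five conditions equivalent by establishing a cycle of implications together with the structural facts already available. Since $x_n \uparrow x$, by \eqref{1b3} we have $x_n \to x$, and by \eqref{1b4} the sequences $(x_n')_n$, being monotone after nothing (they need not be monotone), are handled via the lattice limits that do exist. First I would record the basic monotonicity: for fixed $m$, as $n$ increases $x_n'$ decreases (since $x_n$ increases), so $\lim_n x_n' = \inf_n x_n'$ exists in $\La$ by \eqref{1b4}; call it $x^\flat$. Likewise $x_m \vee x_n'$ is, for fixed $m$, decreasing in $n$, so $\lim_n (x_m \vee x_n')$ exists, and then increasing in $m$, so the double limit $\lim_m \lim_n (x_m \vee x_n')$ exists. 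This shows all the limits written in (b)--(e) are well defined, and moreover $\lim_n(x_m \vee x_n') = x_m \vee x^\flat$ need not hold automatically, which is exactly why (b) and (d) are stated separately.

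The key observation is that $x \vee x^\flat = \lim_m (x_m \vee x^\flat)$ and $\lim_m \lim_n (x_m \vee x_n')$ should coincide; establishing $\lim_n(x_m \vee x_n') = x_m \vee \lim_n x_n'$ for each fixed $m$ is the crux, and I expect this to follow from continuity of $\vee$ along the decreasing sequence $x_n' \downarrow x^\flat$ together with \eqref{1b3}. Granting this, (b)$\Leftrightarrow$(d) and (c)$\Leftrightarrow$(e) reduce to the statement $\lim_m(x_m \vee x^\flat) = x \vee x^\flat$, again an application of continuity along $x_m \uparrow x$. So the whole proposition collapses to relating (a) with the single condition $x \vee x^\flat = 1$ and to showing this is independent of the chosen sequence $(x_n)_n$.

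For the heart of the matter, (a)$\Rightarrow$(c): if $x \in C$ then $x$ has a complement $x'$ in $\Cl(B)$, and I would identify $x^\flat = \lim_n x_n'$ with a candidate complement. Since $x_n \le x$ gives $x_n' \ge x'$ (complements reverse order, and $x'$ is the complement of $x$ in the noise-type algebra $C$), we get $x^\flat \ge x'$, hence $x \vee x^\flat \ge x \vee x' = 1$. This yields (c), and since the argument used an arbitrary sequence, (b) and (c) coincide for elements of $C$. Conversely (b)$\Rightarrow$(a) is the substantive direction: from $x \vee x^\flat = 1$ I would show $x^\flat$ is a complement of $x$, i.e. also $x \wedge x^\flat = 0$, and then invoke \eqref{1b7} to conclude $x \in C$. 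The disjointness $x \wedge x^\flat = 0$ is where I expect to lean on independence: the pattern from Lemma \ref{1c3} and \eqref{1b8} suggests that $x$ and each $x_n'$ enjoy an independence/disjointness relation (as $x_n \wedge x_n' = 0$ with $x_n \le x$), which should pass to the limit $x^\flat$ and combine with $x \vee x^\flat = 1$ via \eqref{1b8} to force $x^\flat$ to be the genuine complement.

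The main obstacle will be the passage to the limit in establishing $x \wedge x^\flat = 0$ and the interchange $\lim_n(x_m \vee x_n') = x_m \vee x^\flat$: the topology on $\La$ makes $\vee$ and $\wedge$ continuous only along monotone sequences (via \eqref{1b3}), so I must arrange every limit to be monotone before applying continuity, and I must be careful that $x \wedge x_n'$ does not obviously vanish (only $x_n \wedge x_n' = 0$ is given, not $x \wedge x_n' = 0$). Resolving this gap — controlling $x \wedge x^\flat$ when only the truncations are disjoint from their own complements — is the delicate step, and I anticipate it requires the independence of $x$ and $x^\flat$ rather than naive disjointness, exactly as the easy-direction argument in Section \ref{1c} used independence plus \eqref{1b8} rather than a direct computation.
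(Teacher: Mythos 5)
Your skeleton for (a)$\Leftrightarrow$(b)$\Leftrightarrow$(c) is sound, and your (a)$\Rightarrow$(c) argument (complements in $C$ reverse order, so $x'_n \ge x'$, hence $\lim_n x'_n \ge x'$ and $x \vee \lim_n x'_n \ge x \vee x' = 1$) is exactly the paper's. But the two load-bearing steps are left resting on expectations that do not hold as stated. The interchange $\lim_n (x_m \vee x'_n) = x_m \vee \lim_n x'_n$ does \emph{not} follow from ``continuity of $\vee$ along the decreasing sequence'': in the lattice $\La$ of \sif s, $y_n \downarrow y$ does not imply $a \vee y_n \downarrow a \vee y$ (the infimum of the joins can exceed the join with the infimum --- the same pathology as the counterexample in Sect.~\ref{1a}), and \eqref{1b3} only converts monotone order-convergence of a sequence into topological convergence of that sequence; it gives no continuity of $\vee$. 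You even note in your first paragraph that the interchange ``need not hold automatically,'' and then assert it from continuity anyway. The paper's Lemma \ref{le2} obtains it from \cite[1d13]{Ts1}, which needs an independence hypothesis: for fixed $m$ the pairs $(x_m, x'_n)$, $n \ge m$, lie in $\La_{x_m} \times \La_{x'_m}$ with $x_m, x'_m$ independent; and for the outer limit one first proves $y = \lim_n x_n$ and $z = \lim_n x'_n$ independent via $x_m \wedge z \le x_m \wedge x'_m = 0$, then \cite[(2a6)]{Ts1} and \cite[(2a5)]{Ts1}. Without this input, (b)$\Leftrightarrow$(d) and (c)$\Leftrightarrow$(e) are unproved.

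For (b)$\Rightarrow$(a) you leave the disjointness $x \wedge \lim_n x'_n = 0$ as an anticipated difficulty requiring a limiting independence argument. In fact it is the easy part, and you should see why: $\lim_n x'_n = \inf_n x'_n \le x'_m$, so $x_m \wedge (\inf_n x'_n) \le x_m \wedge x'_m = 0$ for every $m$, and \cite[(2a6)]{Ts1} passes this to the supremum over $m$, giving $(\sup_m x_m) \wedge (\inf_n x'_n) = 0$ --- this is the paper's Lemma \ref{le1}. Combined with $x \vee \lim_n x'_n = 1$ it exhibits $\lim_n x'_n$ as the complement of $x$ in $\Cl(B)$, and \eqref{1b7} gives $x \in C$. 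So the proposal is not wrong in outline, but both delicate points --- the join/limit interchange and the disjointness --- must be discharged with the specific tools \cite[1d13]{Ts1}, \cite[(2a5)]{Ts1}, \cite[(2a6)]{Ts1}, not with generic continuity or an unspecified passage to the limit.
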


\begin{lemma}\label{le1}
$ (\sup_n x_n) \wedge (\inf_n x'_n) = 0 $ for every increasing
sequence of $ x_n \in B $.
\end{lemma}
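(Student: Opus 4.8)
The plan is to reduce the whole statement to a single independence assertion. Write $ x = \sup_n x_n $ and $ w = \inf_n x'_n $; since $ (x_n)_n $ increases, $ (x'_n)_n $ decreases, so $ x_n \uparrow x $ and $ x'_n \downarrow w $, and in particular $ w \le x'_n $ for every $ n $. For each $ n $ the elements $ x_n $ and $ x'_n $ are independent (being complementary in the noise-type Boolean algebra $ B $), and since $ w \le x'_n $ (hence $ \F_w \subset \F_{x'_n} $), the elements $ x_n $ and $ w $ are independent as well.

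First I would pass to the limit in $ n $ to obtain that $ x $ and $ w $ are independent. Here $ \F_{x_n} \uparrow \F_x $, so any bounded $ \F_x $-measurable $ U $ is the limit in probability of its conditional expectations $ U_n = \Ex(U \mid \F_{x_n}) $, each of which is $ \F_{x_n} $-measurable and therefore independent of every bounded $ \F_w $-measurable $ V $. Applying \eqref{1a2} with $ X_n = U_n \to U $ and the constant sequence $ Y_n = V $ gives $ \Ex(UV) = \Ex(U)\Ex(V) $; as $ U, V $ are arbitrary, $ \F_x $ and $ \F_w $ are independent, that is, $ x $ and $ w $ are independent.

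Finally I would conclude by a self-independence argument. Put $ z = x \wedge w $. Then $ \F_z \subset \F_x $ and $ \F_z \subset \F_w $, and since $ \F_x, \F_w $ are independent, $ \F_z $ is independent of itself; thus every $ A \in \F_z $ satisfies $ P(A) = P(A)^2 $, so $ \F_z $ is trivial and $ z = 0 $, which is exactly the assertion.

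The main obstacle I anticipate is the limiting step: the meet operation is not continuous along decreasing sequences (as the counterexample after \eqref{1a1} shows), so one cannot simply interchange $ \wedge $ with $ \sup_n $ or $ \inf_n $, and the naive identity $ (\sup_n x_n) \wedge w = \sup_n (x_n \wedge w) $ is unavailable. Routing through independence---proving that $ x $ and $ w $ are independent and then invoking self-independence---circumvents this difficulty, the only genuine work being the $ L_2 $/conditional-expectation approximation needed to apply \eqref{1a2}.
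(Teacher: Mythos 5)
Your proof is correct and follows essentially the same route as the paper: your observation that each $x_m$ is independent of $w=\inf_n x'_n$ (because $w \le x'_m$) is the paper's remark that $x_m \wedge (\inf_n x'_n) \le x_m \wedge x'_m = 0$, and the limit-passing plus self-independence argument you carry out is exactly the content of the result \cite[(2a6)]{Ts1} that the paper cites to finish. The only difference is that you prove that cited fact from scratch (via martingale convergence, \eqref{1a2}, and the $P(A)=P(A)^2$ trick) rather than invoking it.
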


\begin{proof}
Note that $ x_m \wedge (\inf_n x'_n) \le x_m \wedge x'_m = 0 $ and use
\cite[(2a6)]{Ts1}.
\end{proof}

\begin{proof}[Proof of Proposition \ref{pr1}]
(c) \imp (b): trivial.

(b) \imp (a): by \ref{le1}, $ x \wedge \lim_n x'_n = 0 $, thus, $
  \lim_n x'_n $ is the complement of $ x $; use \eqref{1b7}.

(a) \imp (c): if $ x_n \uparrow x $ then (taking complements in the
Boolean algebra $ C $) $ x'_n \ge x' $, therefore $ \lim_n x'_n \ge
x' $ and $ x \vee \lim_n x'_n \ge x \vee x' = 1 $.

We see that (a)\equ (b)\equ (c); Lemma \ref{le2} below gives (b)\equ
(d) and (c)\equ (e).
\end{proof}

\begin{lemma}\label{le2}
For every increasing sequence $ (x_n)_n $ of elements of $ B $,
\[
\( \lim_n x_n \) \vee \( \lim_n x'_n \) = \lim_m \lim_n ( x_m \vee
x'_n ) \, .
\]
\end{lemma}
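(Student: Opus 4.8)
The plan is to recognise both sides as limits of monotone sequences, settle one inequality by formal lattice algebra, and reduce the whole statement to a single join-over-meet distributive identity that the independence inside $ B $ is designed to furnish.

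First I would fix notation. Since $ x_1\le x_2\le\cdots $, \eqref{1b3} gives $ \lim_n x_n=\sup_n x_n=x $, while the complements form a decreasing sequence $ x'_1\ge x'_2\ge\cdots $, so $ \lim_n x'_n=\inf_n x'_n=:z $; thus the left-hand side is $ x\vee z $. On the right, for each fixed $ m $ the sequence $ (x_m\vee x'_n)_n $ decreases, so by \eqref{1b4} its limit is $ s_m:=\inf_n(x_m\vee x'_n) $, and since $ s_m $ increases with $ m $ the outer limit is $ \sup_m s_m $. So the assertion becomes $ \sup_m s_m=x\vee z $. One inequality is free in any complete lattice: from $ x'_n\ge z $ we get $ x_m\vee x'_n\ge x_m\vee z $, hence $ s_m\ge x_m\vee z $, and taking the supremum over $ m $ yields $ \sup_m s_m\ge\sup_m(x_m\vee z)=(\sup_m x_m)\vee z=x\vee z $.

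The real content is the reverse inequality, and it is the main obstacle, because the sharper identity it rests on, $ s_m=\inf_n(x_m\vee x'_n)=x_m\vee\inf_n x'_n $, is a distributive law of join over a countable meet --- exactly the kind of thing that fails in a general complete lattice (the map $ h\mapsto\ti h $ of \ref{1a} already displays such a failure). The extra ingredient is independence. For $ n\ge m $ one has $ x_m\le x_n $ and $ x'_n\le x'_m $, so the independence of the complementary pair $ x_n,x'_n $ in $ B $ passes to the sub-elements $ x_m $ and $ x'_n $; in particular $ x_m $ is independent of the entire tail family $ (x'_n)_{n\ge m} $, and of $ z $. I would then show that joining with the fixed, independent $ x_m $ commutes with the decreasing passage $ x'_n\downarrow z $: since $ \F_{x_m} $ splits off as an independent factor, the decreasing intersection $ \bigcap_n(\F_{x_m}\vee\F_{x'_n}) $ collapses to $ \F_{x_m}\vee\bigcap_n\F_{x'_n}=\F_{x_m}\vee\F_z=\F_{x_m\vee z} $, which gives $ s_m\le x_m\vee z $. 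Substituting $ s_m=x_m\vee z $ into $ \sup_m s_m $ then recovers $ x\vee z $, finishing the proof.

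The delicate points, where I expect the work to concentrate, are the identification of the lattice meet $ \inf_n $ with an intersection of $ \sigma $-fields and the verification that independence upgrades the automatic inclusion $ \bigcap_n(\F_{x_m}\vee\F_{x'_n})\supseteq\F_{x_m\vee z} $ to equality; the tool for turning independence of the $ x_m,x'_n $ into independence of the limiting objects is \eqref{1a2}. As an alternative that localises the same difficulty, one may first note the minimax bound $ \sup_m\inf_n(x_m\vee x'_n)\le\inf_n\sup_m(x_m\vee x'_n)=\inf_n(x\vee x'_n) $ and then prove the single identity $ \inf_n(x\vee x'_n)=x\vee z $; and if \cite{Ts1} already records the continuity of $ \vee $ along independent decreasing sequences, that result can simply be quoted in place of the product-space computation.
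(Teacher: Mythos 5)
Your proof is correct, and its crux coincides with the paper's: the only nontrivial point is the inner limit $\inf_n(x_m\vee x'_n)=x_m\vee\inf_n x'_n$, which both you and the paper obtain from the independence of $x_m$ and the decreasing tail $(x'_n)_{n\ge m}$ (inherited from the complementary pair $x_m,x'_m$). The paper simply quotes \cite[1d13]{Ts1} -- the continuity of $\vee$ on $\La_{x_m}\times\La_{x'_m}$ for independent sectors -- exactly the citation you anticipated as an alternative to redoing the $\si$\nobreakdash-field computation $\bigcap_n(\F_{x_m}\vee\F_{x'_n})=\F_{x_m}\vee\bigcap_n\F_{x'_n}$. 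Where you genuinely diverge is the outer limit: the paper first establishes $y\wedge z=0$ and the independence of $y=\lim_n x_n$ and $z=\lim_n x'_n$ (via \cite[(2a6)]{Ts1} and \cite[(2a5)]{Ts1}) in order to apply 1d13 a second time to the pair $(x_m,z)\in\La_y\times\La_z$, whereas you observe that no independence is needed there: $(x_m\vee z)_m$ is increasing with supremum $(\sup_m x_m)\vee z=y\vee z$ by a completely general lattice identity, so \eqref{1b3} already gives the convergence. Your route is the more economical one for this lemma (it isolates the single place where independence enters); the paper's route has the side benefit of recording the independence of $y$ and $z$, but that fact is not needed for the statement itself, and $y\wedge z=0$ is in any case Lemma \ref{le1}. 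The one step you leave as a sketch -- that independence upgrades the trivial inclusion to $\bigcap_n(\F_{x_m}\vee\F_{x'_n})\subseteq\F_{x_m}\vee\F_z$ -- is precisely what \cite[1d13]{Ts1} supplies, so quoting it, as you suggest, closes the argument.
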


\begin{proof}
Denote for convenience $ y = \lim_n x_n $ and $ z = \lim_n x'_n $. We
have $ x'_n \le x'_m $ for $ n \ge m $. Applying \cite[1d13]{Ts1} to
the pairs $ (x_m,x'_n) \in \La_{x_m} \times \La_{x'_m} $ for a fixed $
m $ and all $ n \ge m $ we get $ x_m \vee x'_n \to x_m \vee z $ as $ n
\to \infty $. Further, $ x_m \wedge z \le x_m \wedge x'_m = 0 $ for
all $ m $; by \cite[(2a6)]{Ts1}, $ y \wedge z = 0 $, and by
\cite[(2a5)]{Ts1}, $ y $ and $ z $ are independent. Applying
\cite[1d13]{Ts1} (again) to $ (x_m,z) \in \La_y \times \La_z $ we get
$ x_m \vee z \to y \vee z $ as $ m \to \infty $. Finally, $ \lim_m
\lim_n ( x_m \vee x'_n ) = \lim_m ( x_m \vee z ) = y \vee z = ( \lim_n
x_n ) \vee ( \lim_n x'_n ) $.
\end{proof}

Given arbitrary (rather than increasing) $ x_n \in B $, we apply
Proposition \ref{pr1} to $ y_n = x_1 \vee \dots \vee x_n $, note that
$ y_n \uparrow \sup_n x_n $ and get
\[
\sup_n x_n \in C \equiv \lim_m \lim_n \( x_1 \vee \dots \vee x_m \vee
( x_1 \vee \dots \vee x_n )' \) = 1 \, .
\]
Thus, \eqref{2b2} is equivalent to
\begin{equation}\label{2c8}
\lim_m \lim_n \( x_1 \vee \dots \vee x_m \vee ( x_1 \vee \dots \vee
x_n )' \) = 1 \quad \text{for \almost{\nu} all sequences } (x_n)_n \,
,
\end{equation}
and Proposition \ref{2b3} is equivalent to the following.

\begin{proposition}\label{2c5}
If \eqref{2c8} holds for all $ b_1 \subset b_2 \subset \dots \subset B
$ and $ p_1,p_2,\dots \in (0,1) $ then $ B $ is classical.
\end{proposition}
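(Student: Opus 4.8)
The plan is to argue by contraposition: assuming that $ B $ is not classical, I will produce finite Boolean subalgebras $ b_1 \subset b_2 \subset \dots \subset B $ and numbers $ p_1,p_2,\dots \in (0,1) $ for which \eqref{2c8} fails on a set of sequences of positive $ \nu $\nobreakdash-measure. The first step is purely order-theoretic and belongs to the nonspectral part. Applying Lemma \ref{le2} to the increasing sequence $ y_n = x_1 \vee \dots \vee x_n $, the double limit in \eqref{2c8} equals $ y_\infty \vee z_\infty $, where $ y_\infty = \sup_n x_n $ and $ z_\infty = \inf_n (x_1\vee\dots\vee x_n)' $; moreover $ y_\infty $ and $ z_\infty $ are independent (as shown inside the proof of Lemma \ref{le2}). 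Thus the failure of \eqref{2c8} for a given sequence means precisely $ \F_{y_\infty} \vee \F_{z_\infty} \ne \F_1 $.

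Next I record that, \emph{whatever} the sequence, the two factors split the first chaos completely. Passing to the limit in \eqref{1c1} along $ y_n \uparrow y_\infty $ (equivalently $ (x_1\vee\dots\vee x_n)' \downarrow z_\infty $) gives $ \Down(y_\infty) + \Down(z_\infty) = \One $; hence every $ \psi \in H^{(1)} $ is a sum $ Q_{y_\infty}\psi + Q_{z_\infty}\psi $ of a $ \F_{y_\infty} $\nobreakdash-measurable and a $ \F_{z_\infty} $\nobreakdash-measurable vector, so $ \si(H^{(1)}) \subset \F_{y_\infty}\vee\F_{z_\infty} $. Since $ y_\infty,z_\infty $ are independent, $ L_2(\F_{y_\infty}\vee\F_{z_\infty}) = L_2(\F_{y_\infty}) \otimes L_2(\F_{z_\infty}) $, and the possible defect $ L_2(\F_1) \ominus L_2(\F_{y_\infty}\vee\F_{z_\infty}) $ is contained in $ L_2(\F_1) \ominus L_2\(\si(H^{(1)})\) $, which is nonzero precisely because $ B $ is not classical. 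Fix once and for all a unit vector $ \eta $ in this last space. As $ \big\|\cE{\eta}{\F_{y_\infty}\vee\F_{z_\infty}}\big\| \le 1 $ always, it suffices to exhibit a construction with
\begin{equation}\label{target}
\int \big\| \cE{\eta}{\F_{y_\infty}\vee\F_{z_\infty}} \big\|^2 \,\D\nu \; < \; 1 \, ,
\end{equation}
for then $ \big\|\cE{\eta}{\cdot}\big\| < 1 $, hence $ \F_{y_\infty}\vee\F_{z_\infty} \ne \F_1 $, on a set of positive $ \nu $\nobreakdash-measure.

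For the construction I take the $ b_n $ to refine one another and to generate $ B $ (ensuring, for the spectral step, that their atoms separate points), and I take $ \sum_n p_n < \infty $. With summable $ p_n $ the random ``selected region'' $ U_\infty $ — the support of $ y_\infty $ — contains any fixed point only with probability $ 1 - \prod_n(1-p_n) \in (0,1) $, so that its boundary is genuinely random; two points separated already by $ b_1 $ then fall on opposite sides of the boundary with a fixed positive probability. The heuristic behind \eqref{target} is that a higher-chaos contribution to $ \eta $ carried by a configuration straddling this boundary is split between $ \F_{y_\infty} $ and $ \F_{z_\infty} $, and — this is where non-classicality is indispensable — is \emph{not} recovered as a product of a factor measurable over $ y_\infty $ and one measurable over $ z_\infty $, because in a non-classical noise the higher chaos is not generated by products of lower chaos across a cut.

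The heart of the matter, carried out by the chaos (spectral) decomposition in Section \ref{sec:3}, is to turn this heuristic into the strict inequality \eqref{target}. Decomposing $ \eta $ according to the random region $ U_\infty $ into the parts carried by configurations lying entirely inside $ U_\infty $, entirely outside, or straddling the boundary, the first two parts are captured by $ \cE{\eta}{\F_{y_\infty}\vee\F_{z_\infty}} $ while the straddling part is annihilated; integrating against $ \nu $ then reduces \eqref{target} to a lower bound on the expected squared norm of the straddling part. The main obstacle is exactly this lower bound: one must show that non-classicality — a purely qualitative hypothesis, $ \si(H^{(1)}) \ne \F_1 $ — forces a \emph{uniform} positive loss of higher-chaos mass across the random cut, with no cancellation as the partitions are refined and as the two limits of \eqref{2c8} are performed in turn. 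This is the quantitative spectral estimate that the choice $ \sum_n p_n < \infty $ is designed to make available.
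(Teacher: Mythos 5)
There is a genuine gap, and it is the whole substance of the proposition. Your reductions are sound and closely parallel Sect.~\ref{2c} of the paper: by Lemma \ref{le2} the double limit in \eqref{2c8} is $ y_\infty \vee z_\infty $ with $ y_\infty, z_\infty $ independent; the limit form of \eqref{1c1} does give $ \si(H^{(1)}) \subset \F_{y_\infty} \vee \F_{z_\infty} $ for every sequence; and it would indeed suffice to produce $ b_n $, $ p_n $ and a unit vector $ \eta \perp L_2(\si(H^{(1)})) $ with $ \int \| \cE{\eta}{\F_{y_\infty}\vee\F_{z_\infty}} \|^2 \, \D\nu < 1 $. But you never prove this inequality; you say yourself that the ``lower bound on the expected squared norm of the straddling part'' is the main obstacle and leave it as a heuristic. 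That estimate is not a routine verification: it is essentially the entire content of Section \ref{sec:3} of the paper.

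Two concrete reasons the heuristic cannot be completed as stated. First, your picture of configurations lying inside, outside, or straddling the random region presupposes spectral sets, and the paper points out (Sect.~\ref{3b}) that in this framework spectral sets do not exist in general; one must instead build the counting function $ K = \sup_b K_b $ on the spectral space. Even relating your $ \eta $ to that object requires Propositions \ref{3d1} and \ref{3d2} (so that $ \eta \perp L_2(\si(H^{(1)})) $ forces $ \eta $ to live over $ \{ s : K(s)=\infty \} $, i.e.\ non-classicality forces $ K=\infty $ on a set of positive measure --- the contrapositive of Theorem \ref{3b2}), which in turn needs the tensor factorization of Sect.~\ref{3e}. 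Second, the probabilistic step (the contrapositive of Proposition \ref{3b1}) needs more than $ \sum_n p_n < \infty $: the paper takes $ \sum_n p_n < 1 $ so that with probability bounded away from zero the unselected part retains infinite spectral mass at every stage (Lemma \ref{3c6}), together with auxiliary integers $ c_n $ satisfying $ (1-p_n)^{c_n} \to 0 $ and the extraction \eqref{3c4}, to guarantee that an infinite-mass region is eventually hit (Lemmas \ref{3c7}, \ref{3c8}). None of this appears in your proposal; what you have is a correct reduction and a plausible plan, not a proof.
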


In Sect.~\ref{sec:3} we prove Prop.~\ref{2c5}, thus completing the
proof of Theorem 1.

\section[The difficult direction: spectral arguments]{\raggedright The
  difficult direction: spectral arguments}
\label{sec:3}
\subsection{Preliminaries}
\label{3a}

Let $ (S,\Si,\mu) $ be a measure space, $ \mu(S) < \infty $. As usual,
we often treat equivalence classes of measurable functions on $ S $ as
just measurable functions, which is harmless as long as only countably
many equivalence classes are considered simultaneously. Otherwise,
dealing with uncountable sets of equivalence classes, we must be
cautious.

All equivalence classes of measurable functions $ S \to [0,1] $ are a
complete lattice. Let $ Z $ be some set of such classes. If $ Z $ is
countable then its supremum, $ \sup Z $, may be treated naively (as
the pointwise supremum of functions). For an uncountable $ Z $ we have
$ \sup Z = \sup Z_0 $ for some countable $ Z_0 \subset Z $. In
particular, the equality holds whenever $ Z_0 $ is dense in $ Z $
according to the $ L_1 $ metric.

The same holds for functions $ S \to \{0,1\} $ or, equivalently,
measurable sets. Functions $ S \to [0,\infty] $ are also a complete
lattice, since $ [0,\infty] $ can be transformed into $ [0,1] $ by an
increasing bijection.

Another fact. For every increasing sequence of measurable functions $
f_n : S \to [0,\infty) $ there exist $ n_1 < n_2 < \dots $ such that
almost every $ s \in S $ satisfies one of two incompatible conditions:
\begin{equation}\label{3a1}
\begin{gathered}
\text{either} \quad \lim_n f_n(s) < \infty \, , \\
\text{or} \quad f_{n_k}(s) \ge k \text{ for all $ k $ large enough}
\end{gathered}
\end{equation}
(here ``$ k $ large enough'' means $ k \ge k_0(s) $). Proof (sketch):
take $ n_k $ such that
\[
\sum_k \mu \( \{ s : f_{n_k} < k \} \cap \{ s : \lim_n f_n(s) = \infty
\} \) < \infty \, .
\]

All said above holds also for a measure class space $ (S,\Si,\M) $
(see \cite[Sect.~2a]{Ts2}) in place of the measure space $ (S,\Si,\mu)
$.

\subsection{Classicality via spectrum}
\label{3b}

We have to prove Prop.~\ref{2c5}. A similar result
\cite[Sect.~6c/6.3]{Ts03} in the older framework (of noises over $ \R
$) was proved via spectral sets: almost all spectral sets are finite,
therefore the noise is classical. In the new framework (Boolean
algebras of \sif s), spectral sets appear only in special situations
\cite[Sects.~2c, 3b]{Ts2}, not in general. Nevertheless, in general we
can define on the spectral space $ S $ a function $ K $ that plays the
role of the number of spectral points, and deduce classicality from
almost everywhere finiteness of $ K $.

Recall \cite[Sect.~2b]{Ts2}: the spectrum of a noise-type Boolean
algebra $ B $ is a measure class space $ (S,\Si,\M) $ together with an
isomorphism $ \al : \A \to L_\infty(S,\Si,\M) $, where $ \A $ is the
commutative von Neumann algebra generated by the operators $ Q_x $
(for $ x \in B $) on the Hilbert space $ H = L_2(\Om,\F,P) $. Recall
also $ S_x $: $ \al(Q_x) = \One_{S_x} $ for $ x \in \Cl(B) $.

Claim:
\begin{equation}\label{3b01}
x_n \downarrow x \text{ implies } S_{x_n} \downarrow S_x \, ;
\; \text{ also } x_n \uparrow x \text{ implies } S_{x_n} \uparrow S_x
\, ;
\end{equation}
here $ x,x_1,x_2,\dots \in \Cl(B) $. Proof: let $ x_n \downarrow x $,
then $ Q_{x_n} \downarrow Q_x $ by \eqref{1b3}, thus $ \al(Q_{x_n})
\downarrow \al(Q_x) $, which means $ S_{x_n} \downarrow S_x $; the
case $ x_n \uparrow x $ is similar.

\begin{sloppypar}
The relation $ \lim_m \lim_n ( x_m \vee x'_n ) = 1 $ treated in
Sect.~\ref{2c} becomes $ { \lim_m \lim_n S_{x_m \vee x'_n} = S } $, that
is, $ \cup_m \cap_n S_{x_m \vee x'_n} = S $; in other words, almost
every $ s \in S $ satisfies $ \exists m \, \forall n \;\> s \in S_{x_m
\vee x'_n} $. Accordingly, \eqref{2c8} may be rewritten as follows:
\begin{multline}\label{3b02}
\text{for \almost{\nu} all sequences } (x_n)_n \, , \quad
 \text{for almost all } s \in S \, , \\
\exists m \; \forall n \quad s \in S_{ x_1 \vee \dots \vee x_m \vee (
 x_1 \vee \dots \vee x_n )' } \, .
\end{multline}
\end{sloppypar}

Let $ b \subset B $ be a finite Boolean subalgebra. For every $ s \in
S $ the filter $ \{ x \in b : s \in S_x \} $ (a kind of rough
``spectral filter'', see \cite[2c]{Ts2}), like every filter on a
finite Boolean algebra, is generated by some $ x_b(s) \in b $:
\begin{equation}\label{3b03}
\forall x \in b \quad \( s \in S_x \equiv x \ge x_b(s) \) \, .
\end{equation}
Like every element of $ b $, $ x_b(s) $ is the union of some of the
atoms of $ b $; the number of these atoms will be denoted by $ K_b(s)
$:
\[
K_b(s) = | \{ a \in \Atoms(b) : a \le x_b(s) \} | \, .
\]
For two finite Boolean subalgebras,
\[
\text{if} \quad b_1 \subset b_2 \quad \text{then} \quad K_{b_1}(\cdot)
\le K_{b_2}(\cdot) \, .
\]
Each $ K_b $ is an equivalence class (rather than a function), and the
set of all $ b $ need not be countable. We take supremum in the
complete lattice of all equivalence classes of measurable functions $
S \to [0,+\infty] $:
\[
K = \sup_b K_b \, , \quad K : S \to [0,+\infty] \, ,
\]
where $ b $ runs over all finite Boolean subalgebras $ b \subset B $.

Proposition \ref{2c5} follows immediately from Proposition \ref{3b1}
and Theorem \ref{3b2} below.

\begin{proposition}\label{3b1}
If \eqref{3b02} holds for all $ b_1 \subset b_2 \subset \dots $ and $
p_1,p_2,\dots \in (0,1) $ then $ K(\cdot) < \infty $ almost
everywhere.
\end{proposition}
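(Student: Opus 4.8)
The plan is to argue by contraposition: assuming $ K = \infty $ on a set of positive measure, I will exhibit a single chain $ b_1 \subset b_2 \subset \dots $ and numbers $ p_1,p_2,\dots \in (0,1) $ for which \eqref{3b02} fails, contradicting the hypothesis. Fix a finite representative measure $ \mu $ in the class $ \M $. First I would translate the event in \eqref{3b02} into the language of the spectral filters \eqref{3b03}. For $ m \le n $ all of $ y_m = x_1 \vee \dots \vee x_m $, $ y_n $ and $ y_n' $ lie in $ b_n $, so by \eqref{3b03} one has $ s \in S_{y_m \vee y_n'} $ iff no atom $ a \in \Atoms(b_n) $ with $ a \le x_{b_n}(s) $ satisfies $ a \le y_n $ and $ a \not\le y_m $. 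Writing, for such a spectral atom $ a $ at level $ n $, $ \tau(a) $ for the first level $ i $ at which the ancestor of $ a $ in $ b_i $ is \emph{selected} (included in $ x_i $), this says exactly that $ \tau(a) \in (m,n] $ for no spectral atom. Hence the negation of \eqref{3b02} at $ s $ is that spectral atoms with arbitrarily large \emph{finite} $ \tau $ occur; equivalently, that the events $ G_t = G_t(s) $ = ``some spectral atom at level $ t $ is selected in $ x_t $ but unselected at all levels $ < t $'' occur for infinitely many $ t $.

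Next I would reduce to a convenient positive-measure set and fix the parameters. Put $ p_t = 2^{-t} $ and $ c_0 = \prod_i (1-p_i) \in (0,1) $; then for every spectral atom the probability that its whole ancestry through level $ t-1 $ is unselected equals $ \prod_{i<t}(1-p_i) \ge c_0 $, uniformly in the atom and in $ s $. Using directedness of the finite subalgebras (Sect.~\ref{2a}) and the countable-supremum remark of Sect.~\ref{3a}, fix a chain with $ K_{b_n} \uparrow K $; then, applying the Borel--Cantelli idea behind \eqref{3a1} to $ K_{b_n} $ together with continuity of measure, pass to a subsequence of this chain and a subset $ E' $ with $ \mu(E') > 0 $ on which $ K_{b_t}(s) \ge 2^{t+1}/c_0 $ for every $ t $.

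The third, and main, step is a uniform lower bound $ \nu(G_t(s)) \ge \delta > 0 $ for all large $ t $ and all $ s \in E' $. Let $ A^*_t $ be the random set of spectral atoms at level $ t $ whose ancestry through level $ t-1 $ is entirely unselected; it is determined by the selections at levels $ < t $, and conditionally on those each atom of $ A^*_t $ is selected at level $ t $ independently with probability $ p_t $, so $ \nu(G_t \mid \text{levels} < t) = 1 - (1-p_t)^{|A^*_t|} $. Here $ \int |A^*_t|\,\D\nu = K_{b_t}(s)\prod_{i<t}(1-p_i) \ge 2^{t+1} $, which tends to infinity uniformly on $ E' $. I would then use a second-moment (Paley--Zygmund) estimate to show that, with probability bounded below, $ |A^*_t| $ is at least half its mean; the needed variance bound rests on the covariance estimate for two spectral atoms sharing their ancestry up to some level $ j $, whose survival events have correlation at most $ 1/\prod_{i\le j}(1-p_i) \le 1/c_0 $, a bound that is uniform precisely because $ \sum_i p_i < \infty $. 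This gives $ \nu\bigl(|A^*_t| \ge 2^t\bigr) \ge c_0/8 $, and since $ p_t\cdot 2^t = 1 $ forces $ 1-(1-p_t)^{2^t} \ge \tfrac12 $, one obtains $ \nu(G_t) \ge c_0/16 $, uniformly for $ s \in E' $ and large $ t $.

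Finally, from $ \nu(G_t(s)) \ge \delta $ for all large $ t $ the elementary inequality $ \nu(\limsup_t G_t) = \lim_N \nu\bigl(\bigcup_{t\ge N} G_t\bigr) \ge \delta $ gives $ \nu(G_t \text{ i.o.}) \ge \delta $ for every $ s \in E' $; notably no independence of the $ G_t $ is required, which is exactly what makes the dependence through shared ancestry harmless. By Fubini (with the representative $ \mu $), the failure set of \eqref{3b02} then has measure at least $ \delta\,\mu(E') > 0 $, contradicting the hypothesis and forcing $ K < \infty $ almost everywhere. I expect the main obstacle to be the second-moment concentration of $ |A^*_t| $: one must control the correlations coming from the tree of shared ancestries and keep every estimate uniform over $ E' $ and over $ s $, so that the single choice $ p_t = 2^{-t} $ works simultaneously for a whole positive-measure set of spectral points.
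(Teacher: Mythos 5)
Your argument is correct, but it takes a genuinely different route from the paper's. The paper also fixes a chain with $ K_{b_n} \uparrow K $ and random selections $ X_n $, but its parameters satisfy $ \sum_n p_n < 1 $ and $ (1-p_n)^{c_n} \to 0 $, and it never looks at your events $ G_t $ or at second moments. Instead it introduces localized counting functions $ K(x,\cdot) $ for $ x \in B $, proves their additivity \eqref{3c25}, and uses the dichotomy \eqref{3a1}, diagonalized over the countably many $ x \in b_1 \cup b_2 \cup \dots $, to arrange $ K_{b_n}(x,s) \ge c_n $ eventually whenever $ K(x,s) = \infty $. The contradiction then comes from two complementary first-moment estimates: a union bound $ \Pr{K(Y'_m,s)<\infty} \le p_1+\dots+p_m < 1 $ when $ K(s)=\infty $ (Lemma \ref{3c6}), versus the fact that, conditionally on $ Y'_m = y $ with $ K(y,s)=\infty $, independence of the $ X_n $ and $ (1-p_n)^{c_n}\to 0 $ force some later level to select a surviving spectral atom below $ y $ almost surely (Lemmas \ref{3c7}, \ref{3c8}), so that \eqref{3b02} would give $ \Pr{\exists m\; K(Y'_m,s)<\infty}=1 $. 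You replace this soft zero--one mechanism by a quantitative one: the first-selection times $ \tau(a) $, the identity $ \nu(G_t \mid \text{levels}<t) = 1-(1-p_t)^{|A^*_t|} $ with $ \Ex |A^*_t| = K_{b_t}(s)\prod_{i<t}(1-p_i) $, the tree-correlation bound $ \Pr{a,a'\in A^*_t} \le c_0^{-1}\Pr{a\in A^*_t}\Pr{a'\in A^*_t} $ feeding a Paley--Zygmund estimate, and reverse Fatou for $ \limsup_t G_t $ (correctly, with no independence of the $ G_t $ needed). Both mechanisms are sound; your translation of \eqref{3b02} via \eqref{3b03} into ``$ G_t $ infinitely often'' checks out, as does the Egorov-type uniformization producing $ E' $. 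What the paper's route buys is that only first moments and a union bound are needed, at the price of the auxiliary functions $ K(x,\cdot) $ and the diagonal argument over all $ x $; what yours buys is explicit uniform constants, a single positive-measure set $ E' $ in place of the per-$x$ dichotomy \eqref{3c4}, and the weaker constraint $ \sum_n p_n < \infty $ in place of $ \sum_n p_n < 1 $.
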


\begin{theorem}\label{3b2}
If $ K(\cdot) < \infty $ almost everywhere then $ B $ is classical.
\end{theorem}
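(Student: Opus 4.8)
The plan is to reduce the statement to the equality $L_2(\si(H^{(1)}))=H$, which is the form of classicality already used in Section~\ref{1c} (there ``$\si(H^{(1)})=\F_1$ by the classicality''; this condition is equivalent to classicality, cf. \cite[Def.~1a2]{Ts2}). I would establish this by decomposing $H$ spectrally over $S$ through the isomorphism $\al:\A\to L_\infty(S,\Si,\M)$ and showing that the part of $H$ lying over $\{K<\infty\}$, namely $\al^{-1}(\One_{\{K<\infty\}})H$ (the set $\{K<\infty\}$ is measurable since $K$ is a countable supremum, by Section~\ref{3a}), coincides with $L_2(\si(H^{(1)}))$. Since $K<\infty$ almost everywhere by hypothesis, this part is all of $H$, and the claim follows.

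For the spectral bookkeeping I would work with finite Boolean subalgebras $b\subset B$. Complementary independent elements factor $H$ (a basic property of noise-type Boolean algebras, \cite{Ts1}), so for the atoms $a_1,\dots,a_n$ of $b$ one has $H=\bigotimes_i L_2(\F_{a_i})$; writing $L_2(\F_{a_i})=\C 1\oplus\ti H_i$ for the mean-zero part, and using $Q_x=\al^{-1}(\One_{S_x})$ together with $S_x=\{x\ge x_b(\cdot)\}$ from \eqref{3b03}, Möbius inversion on the lattice $b$ yields a resolution of the identity $\{F^b_y\}_{y\in b}\subset\A$ with $Q_x=\sum_{y\le x}F^b_y$ and $F^b_y H=\bigotimes_{a\in\Atoms(y)}\ti H_a$; on the range of $F^b_y$ one has $K_b=|\Atoms(y)|$. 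The decisive structural input is the additivity of $\Down$ on $H^{(1)}$ over disjoint elements (the computation ending Section~\ref{1b}) together with \eqref{1c1}: for a split $a=a'\vee a''$ one has $\Down(a)=\Down(a')+\Down(a'')$, so the localized first chaos $H^{(1)}_a=\Down(a)H^{(1)}\subset\ti H_a$ stays ``one-atom'' under refinement, whereas the non-first-chaos part of $\ti H_a$ acquires a genuine two-atom component in $\ti H_{a'}\otimes\ti H_{a''}$.

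The heart of the proof is the identification of the $\{K<\infty\}$-part of $H$ with $L_2(\si(H^{(1)}))$. One inclusion is easy: a product of $m$ first-chaos vectors supported at distinct atoms lies in $F^b_y H$ with $|\Atoms(y)|=m$, i.e.\ in the sector $\{K_b=m\}$, so $L_2(\si(H^{(1)}))$ lies over $\{K<\infty\}$. For the reverse inclusion I would argue level by level, the base case being $\{K=1\}$-part $=H^{(1)}$ and, more generally, $\{K=m\}$-part $=$ the closed span of $m$-fold products of first-chaos vectors. By the additivity above, first chaos remains in the low-$K$ sectors; the content to be excluded is the non-first-chaos (``nonlinear'') part, and the key claim is that any $\eta\perp L_2(\si(H^{(1)}))$ spreads to arbitrarily many atoms under refinement, i.e.\ $K=\infty$ on the spectral support of $\eta$. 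Quantitatively one shows that the single-atom component of such an $\eta$ tends to $0$ as $b$ is refined, so that the spectral mass of $\eta$ escapes every sector $\{K_b\le N\}$; passing to the limit in the complete lattice of Section~\ref{3a}, and using the dichotomy \eqref{3a1} (a Borel--Cantelli selection of refinements) to upgrade ``escaping every finite level'' to ``$K=\infty$ almost everywhere on the support of $\eta$'', completes this inclusion.

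The main obstacle is exactly this spreading estimate: proving that non-first-chaos content cannot remain localized at a single shrinking atom, and therefore must contribute to unboundedly many spectral points. This is where the spectral description of the first chaos from \cite[Sects.~2c, 3b]{Ts2} would enter, in combination with the additivity of $\Down$; the surrounding steps (the tensor factorization, the Möbius inversion, and the lattice-supremum passage to the limit) are routine by comparison. Granting the crux, $\{K<\infty\}$-part $=L_2(\si(H^{(1)}))$, whence the hypothesis $K<\infty$ almost everywhere forces $L_2(\si(H^{(1)}))=H$, that is $\si(H^{(1)})=\F_1$, so $B$ is classical.
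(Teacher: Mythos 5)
Your reduction of classicality to $L_2(\si(H^{(1)}))=H$ and your plan to identify the part of $H$ lying over $\{K<\infty\}$ with $L_2(\si(H^{(1)}))$ do match the shape of the paper's argument, but the proposal does not actually prove the statement: the step you defer as ``the crux'' --- that every $\eta\perp L_2(\si(H^{(1)}))$ has spectral support in $\{s:K(s)=\infty\}$ --- is not a technical estimate surrounded by routine bookkeeping; it is equivalent to the inclusion $H(\{K<\infty\})\subset L_2(\si(H^{(1)}))$, which is the entire content of the theorem. No argument is given for the proposed ``spreading estimate'' (that the single-atom component of a non-first-chaos vector tends to $0$ under refinement, and that its spectral mass then escapes every sector $\{K_b\le N\}$), and it is not clear how such a quantitative estimate would be obtained. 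In addition, your ``easy inclusion'' silently assumes that $L_2(\si(H^{(1)}))$ is the closed span of finite products of first-chaos vectors; that is a chaos-decomposition statement not available at this point (bounded functionals such as $\exp(\I\psi)$ for $\psi\in H^{(1)}$ lie in $L_2(\si(H^{(1)}))$ but are not finite products), though fortunately that direction is not needed for the theorem.

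The paper closes the gap quite differently. First it identifies the $\{K=1\}$ sector exactly: by Lemma \ref{3d3}, $H(\{s:K_{b_x}(s)=1\})=\{\psi: \psi=Q_x\psi+Q_{x'}\psi\}$; Lemma \ref{3d5} writes $\{K=1\}$ as the intersection over all $x\in B$ of $\{K_{b_x}=1\}$; and \cite[1b4]{Ts2} then gives $H(E_1)=H^{(1)}$ (Proposition \ref{3d1}). For $k\ge2$ it works with generated $\si$-fields rather than closed linear spans: $\{K=k\}=\sup_{x}\{K(x,\cdot)=k-1,\ K(x',\cdot)=1\}$ (Lemma \ref{3f1} and its generalization), the corresponding subspace factorizes as $H_x(\cdot)\otimes H_{x'}(\cdot)$ by \eqref{3e1}, and the relativized first-chaos identification (Proposition \ref{3e2}) gives $\si(H^{(k)})\subset\si(H^{(k-1)}\cup H^{(1)})$, whence $\si(H^{(k)})\subset\si(H^{(1)})$ by induction (Proposition \ref{3d2}). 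Since $K<\infty$ a.e.\ gives $H=\oplus_k H^{(k)}$, one concludes $\F_1=\bigvee_k\si(H^{(k)})=\si(H^{(1)})$. Passing to $\si(\cdot)$ instead of linear spans is precisely what makes the higher-$k$ step tractable and sidesteps the density problem above; your proposal would need either this device or a genuine proof of the spreading claim in order to be complete.
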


\subsection{Proving Proposition \ref{3b1}}
\label{3c}

We choose $ p_1,p_2,\dots \in (0,1) $ and $ c_1,c_2,\dots \in
\{1,2,3,\dots\} $ such that
\begin{gather}
\sum_n p_n < 1 \, , \label{3c1} \\
(1-p_n)^{c_n} \to 0 \quad \text{as } n \to \infty \, . \label{3c2}
\end{gather}
We also choose finite Boolean subalgebras $ b_1 \subset b_2 \subset
\dots \subset B $ such that $ K_{b_n} \uparrow K $.

We define for $ x \in B $
\begin{gather}
K_b(x,s) = | \{ a \in \Atoms(b) : a \le x_b(s) \wedge x \} | \, , \\
K(x) = \sup_b K_b(x) \, ;
\end{gather}
as before, $ b $ runs over all finite Boolean subalgebras of $ B $; $
K_b(x) = K_b(x,\cdot) $ is an equivalence class, and $ K(x) $ is the
supremum of such equivalence classes. (Hopefully, the new $ K(x) $
will not be confused with the old $ K(s) $.)

Claim (additivity):
\begin{equation}\label{3c25}
K(x\vee y) = K(x) + K(y) \quad \text{whenever } x \wedge y = 0 \, .
\end{equation}
Proof: $ K_b (x\vee y) = K_b (x) + K_b (y) $ when $ b \ni x,y $; this
restriction on $ b $ does not change the supremum.

Claim:
\begin{equation}
K_{b_n}(x) \uparrow K(x)
\end{equation}
for every $ x \in B $. Proof: $ K(x) \ge \lim_n K_{b_n}(x) = \lim_n \(
K_{b_n} - K_{b_n}(x') \) \ge K - K(x') = K(x) $.

Using \eqref{3a1} we can take $ n_1 < n_2 < \dots $ such that
for almost every $ s \in S $
\begin{equation}\label{3c3}
\begin{gathered}
\text{either } K(x,s) < \infty \, , \\
\text{or } K_{b_{n_k}}(x,s) \ge c_k \text{ for all $ k $ large enough.}
\end{gathered}
\end{equation}
These $ n_k $ depend on $ x \in B $. However, countably many $ x $ can
be served by a single sequence $ (n_k)_k $ using the well-known
diagonal argument. This way we ensure \eqref{3c3} with a single $
(n_k)_k $ for all $ x \in b_1 \cup b_2 \cup \dots\, $ Now we rename $
b_{n_k} $ into $ b_k $, discard a null set of bad points $ s \in S $
and get
\begin{equation}\label{3c4}
\begin{gathered}
\text{either } K(x,s) < \infty \, , \\
\text{or } K_{b_n}(x,s) \ge c_n \text{ for all $ n $ large enough}
\end{gathered}
\end{equation}
for all $ x \in b_1 \cup b_2 \cup \dots $ and $ s \in S $; here ``$n$
large enough'' means $ n \ge n_0(x,s) $.

We recall the product measure $ \nu = \nu_1 \times \nu_2 \times \dots
$ introduced in Sect.~\ref{2b} on the product set $ b_1 \times b_2
\times \dots $; as before, $ \nu_n = \nu_{b_n,p_n} $.
For notational convenience we treat the coordinate maps $ X_n : ( b_1
\times b_2 \times \dots, \nu ) \to b_n $, $ X_n(x_1,x_2,\dots) = x_n
$, as independent \valued{b_n} random variables; $ X_n $ is
distributed $ \nu_n $, that is, $ \Pr{ X_n = x } = \nu_n(\{x\}) $ for
$ x \in b_n $. We introduce \valued{b_n} random variables
\[
Y_n = X_1 \vee \dots \vee X_n \, .
\]

\begin{lemma}\label{3c6}
$ \Pr{ K(Y'_n,s) < \infty } \le p_1 + \dots + p_n $ for all $ s \in S
$ such that $ K(s) = \infty $ and all $ n $.
\end{lemma}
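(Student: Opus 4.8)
The plan is to reduce the event $ \{ K(Y'_n,s) < \infty \} $ to a statement about a single ``heavy'' spectral atom of $ b_n $, and then to close with a crude union bound. Fix $ s $ with $ K(s) = \infty $. Since $ b_n $ is finite, $ x_{b_n}(s) $ is the join of its finitely many, pairwise disjoint atoms $ \alpha_1,\dots,\alpha_m \in \Atoms(b_n) $, where $ m = K_{b_n}(s) $. Every spectral atom $ a \in \Atoms(b) $ of a finer algebra $ b \supseteq b_n $ satisfies $ a \le x_b(s) \le x_{b_n}(s) $, whence $ K(x,s) = K(x \wedge x_{b_n}(s), s) $ for every $ x \in B $; in particular $ K(\alpha_j,s) $ is meaningful, and additivity \eqref{3c25} applied to $ x_{b_n}(s) = \alpha_1 \vee \dots \vee \alpha_m $ gives $ \sum_{j=1}^m K(\alpha_j,s) = K(x_{b_n}(s),s) = K(s) = \infty $. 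As $ m < \infty $, at least one atom $ \alpha_{j_0} $ is heavy, i.e.\ $ K(\alpha_{j_0},s) = \infty $.

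Next I would express $ K(Y'_n,s) $ through these atoms. Since $ Y_n, Y'_n \in b_n $, each $ \alpha_j $ lies entirely below $ Y_n $ or entirely below $ Y'_n $, and $ Y'_n \wedge x_{b_n}(s) = \bigvee_{j:\, \alpha_j \le Y'_n} \alpha_j $. Hence, again by \eqref{3c25},
\[
K(Y'_n,s) = K(Y'_n \wedge x_{b_n}(s), s) = \sum_{j:\, \alpha_j \le Y'_n} K(\alpha_j,s) \, .
\]
Therefore $ K(Y'_n,s) < \infty $ forces $ \alpha_j \not\le Y'_n $, that is $ \alpha_j \le Y_n $, for every heavy $ \alpha_j $; in particular $ \{ K(Y'_n,s) < \infty \} \subset \{ \alpha_{j_0} \le Y_n \} $.

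Finally I would estimate the probability of the latter event. In the Boolean algebra $ b_n $ the atom $ \alpha_{j_0} $ lies below $ Y_n = X_1 \vee \dots \vee X_n $ if and only if $ \alpha_{j_0} \le X_i $ for some $ i \le n $, and $ \alpha_{j_0} \le X_i $ holds exactly when the atom of $ b_i $ above $ \alpha_{j_0} $ is included in $ X_i $, an event of probability $ p_i $. A union bound then yields
\[
\Pr{ K(Y'_n,s) < \infty } \le \Pr{ \alpha_{j_0} \le Y_n } \le \sum_{i=1}^n \Pr{ \alpha_{j_0} \le X_i } = p_1 + \dots + p_n \, .
\]
I expect the only real obstacle to be the bookkeeping of the second paragraph: the point is that the a priori awkward condition ``only finitely many spectral atoms sit below $ Y'_n $'' collapses, through additivity and the finiteness of $ \Atoms(b_n) $, to the single covering event $ \alpha_{j_0} \le Y_n $ for one heavy atom. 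Once this reduction is in place, independence of the $ X_i $ is not even needed; the crude union bound already gives the asserted estimate.
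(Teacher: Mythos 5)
Your proof is correct and follows essentially the same route as the paper's: by additivity there is an atom $ a \in \Atoms(b_n) $ with $ K(a,s) = \infty $, the event $ K(Y'_n,s) < \infty $ forces $ a \le Y_n $ and hence $ a \le X_k $ for some $ k \le n $, and a union bound with $ \Pr{ a \le X_k } = p_k $ finishes. The extra bookkeeping in your second paragraph (restricting to the atoms below $ x_{b_n}(s) $) is harmless but unnecessary, since the paper simply sums $ K(a,s) $ over all atoms of $ b_n $.
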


\begin{proof}
There exists $ a \in \Atoms(b_n) $ such that $ K(a,s) = \infty $
(since $ \sum_a K(a,s) = K(s) = \infty $). We have $ K(Y'_n,s) <
\infty \imply a \le Y_n \imply \exists k\in\{1,\dots,n\} \;\> a \le
X_k $, therefore $ \Pr{ K(Y'_n,s) < \infty } \le \sum_{k=1}^n \Pr{ a
\le X_k } = \sum_{k=1}^n p_k $.
\end{proof}

\begin{lemma}\label{3c7}
If $ x \in b_m $ and $ s \in S $ satisfy $ K(x,s)=\infty $ then
\[
\Pr{ \forall n > m \quad X_n \wedge x \wedge x_{b_n}(s) = 0 } = 0 \,
.
\]
\end{lemma}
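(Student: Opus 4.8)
The plan is to write the event in question as the intersection $ \bigcap_{n>m} E_n $ of the events $ E_n = \{ X_n \wedge x \wedge x_{b_n}(s) = 0 \} $ and to bound the probability of this intersection by that of a single factor, letting the index tend to infinity. Since $ \bigcap_{n>m} E_n \subset E_N $ for every $ N > m $, we have $ \Pr{\bigcap_{n>m} E_n} \le \Pr{E_N} $, so it suffices to show that $ \Pr{E_N} \to 0 $ as $ N \to \infty $; no independence of the $ E_n $ is needed.

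First I would compute $ \Pr{E_N} $ for $ N > m $. Here $ x \in b_m \subset b_N $ and $ x_{b_N}(s) \in b_N $, so $ x \wedge x_{b_N}(s) \in b_N $ is the join of exactly those atoms $ a \in \Atoms(b_N) $ with $ a \le x_{b_N}(s) \wedge x $; by definition their number is $ K_{b_N}(x,s) $. Since $ X_N $ is the join of a random set of atoms of $ b_N $, each included independently with probability $ p_N $ (this is $ \nu_N = \nu_{b_N,p_N} $, by \eqref{2b1}), the meet $ X_N \wedge x \wedge x_{b_N}(s) $ vanishes precisely when none of these $ K_{b_N}(x,s) $ atoms belongs to $ X_N $. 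Hence $ \Pr{E_N} = (1-p_N)^{K_{b_N}(x,s)} $.

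Next I would invoke the dichotomy \eqref{3c4}. As $ x \in b_m \subset b_1 \cup b_2 \cup \dots $ and $ K(x,s) = \infty $, the first alternative in \eqref{3c4} is excluded, so the second holds: $ K_{b_N}(x,s) \ge c_N $ for all $ N $ large enough. Therefore $ \Pr{E_N} \le (1-p_N)^{c_N} $ for large $ N $, and $ (1-p_N)^{c_N} \to 0 $ by \eqref{3c2}. Combining with the bound from the first paragraph gives $ \Pr{\bigcap_{n>m} E_n} \le \inf_{N>m} \Pr{E_N} = 0 $, which is the assertion.

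The argument is short; the only points requiring care are the probability computation --- identifying that $ E_N $ depends on $ X_N $ only through whether $ X_N $ avoids the $ K_{b_N}(x,s) $ atoms lying below $ x \wedge x_{b_N}(s) $, which is where the independent-atom structure of $ \nu_{b_N,p_N} $ enters --- and the correct application of \eqref{3c4}, for which one must check that $ x $ indeed lies in $ b_1 \cup b_2 \cup \dots $ and that $ K(x,s)=\infty $ forces the second branch.
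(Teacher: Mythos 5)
Your proof is correct and follows essentially the same route as the paper: bound the intersection by a single event $E_N$, compute $\Pr{E_N}=(1-p_N)^{K_{b_N}(x,s)}$ from the independent-atom structure of $\nu_{b_N,p_N}$, and let $N\to\infty$ using \eqref{3c4} and \eqref{3c2}. The only difference is that you spell out explicitly the (correct) reduction to a single factor and the verification that $x\in b_1\cup b_2\cup\dots$, which the paper leaves implicit.
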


\begin{proof}
For $ n > m $,
\[
\Pr{ X_n \wedge x \wedge x_{b_n}(s) = 0 } = (1-p_n)^{K_{b_n}(x,s)} \,
,
\]
since $ x \wedge x_{b_n}(s) $ contains $ K_{b_n}(x,s) $ atoms of $ b_n
$. By \eqref{3c4}, $ K_{b_n}(x,s) \ge c_n $ for all $ n $ large
enough. Thus, $ \Pr{ X_n \wedge x \wedge x_{b_n}(s) = 0 } \le
(1-p_n)^{c_n} \to 0 $ as $ n \to \infty $ by \eqref{3c2}.
\end{proof}

\begin{lemma}\label{3c8}
\[
\Pr{ K(Y'_m,s)=\infty \text{ \small and } \forall n > m \;\> s \in
S_{Y_m\vee Y'_n} } = 0
\]
for all $ s \in S $ and $ m $.
\end{lemma}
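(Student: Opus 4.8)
The plan is to decompose the event according to the (finitely many) values of $ Y_m \in b_m $ and then invoke Lemma \ref{3c7}. Write $ E $ for the event in question. First I would record the membership structure: for $ n > m $ we have $ Y_m, Y'_m \in b_m \subset b_n $ and $ Y_n, Y'_n \in b_n $, so $ Y_m \vee Y'_n \in b_n $ and \eqref{3b03} applies with $ b = b_n $: namely $ s \in S_{Y_m\vee Y'_n} $ iff $ Y_m \vee Y'_n \ge x_{b_n}(s) $, and by \eqref{2a2} this holds iff every atom $ a \in \Atoms(b_n) $ with $ a \le x_{b_n}(s) $ satisfies $ a \le Y_m \vee Y'_n $. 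Since for an atom $ a $ of $ b_n $ one has $ a \le Y'_n \equiv a \wedge Y_n = 0 $, and $ a $ lies below a join iff it lies below one of the joinands, the condition $ s \in S_{Y_m\vee Y'_n} $ reads: every atom $ a \le x_{b_n}(s) $ satisfies $ a \le Y_m $ or $ a \wedge Y_n = 0 $.

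The key step is the pointwise implication that, for each $ n > m $, the event $ \{ s \in S_{Y_m\vee Y'_n} \} $ is contained in $ \{ X_n \wedge Y'_m \wedge x_{b_n}(s) = 0 \} $. Indeed, an atom $ a \le X_n \wedge Y'_m \wedge x_{b_n}(s) $ would satisfy $ a \le X_n \le Y_n $, hence $ a \wedge Y_n \ne 0 $, and $ a \le Y'_m $, hence $ a \wedge Y_m = 0 $, i.e.\ $ a \not\le Y_m $; together with $ a \le x_{b_n}(s) $ this contradicts the description of $ s \in S_{Y_m\vee Y'_n} $ just obtained. Consequently $ \bigcap_{n>m} \{ s \in S_{Y_m\vee Y'_n} \} \subseteq \bigcap_{n>m} \{ X_n \wedge Y'_m \wedge x_{b_n}(s) = 0 \} $, so that $ E \subseteq \{ K(Y'_m,s) = \infty \} \cap \bigcap_{n>m} \{ X_n \wedge Y'_m \wedge x_{b_n}(s) = 0 \} $.

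Finally I would sum over the values $ y \in b_m $ of $ Y_m $, writing $ w = y' \in b_m $ for the corresponding value of $ Y'_m $. Since $ Y_m $ (hence $ w $) is a function of $ X_1,\dots,X_m $, while $ X_{m+1},X_{m+2},\dots $ are independent of these, the event $ \{ Y_m = y \} $ is independent of $ \bigcap_{n>m} \{ X_n \wedge w \wedge x_{b_n}(s) = 0 \} $. For each $ y $ with $ K(w,s) = \infty $, Lemma \ref{3c7} applied to $ x = w \in b_m $ gives $ \Pr{ \forall n > m \; X_n \wedge w \wedge x_{b_n}(s) = 0 } = 0 $; for each $ y $ with $ K(w,s) < \infty $ the constraint $ K(Y'_m,s) = \infty $ in $ E $ excludes $ \{ Y_m = y \} $ altogether. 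As $ b_m $ is finite, summing these contributions yields $ \Pr{E} = 0 $.

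The main obstacle is the pointwise implication of the second paragraph: translating the spectral condition $ s \in S_{Y_m\vee Y'_n} $, via \eqref{3b03} and the atom combinatorics of Sect.~\ref{2a}, into the purely combinatorial condition $ X_n \wedge Y'_m \wedge x_{b_n}(s) = 0 $ that Lemma \ref{3c7} forces to have probability zero. Once this bridge is in place, the independence decomposition over $ b_m $ and Lemma \ref{3c7} complete the argument routinely.
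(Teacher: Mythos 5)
Your proof is correct and takes essentially the same route as the paper's: decompose over the finitely many values of $Y'_m$, translate $s \in S_{Y_m\vee Y'_n}$ via \eqref{3b03} into the condition $X_n \wedge Y'_m \wedge x_{b_n}(s) = 0$, and apply Lemma~\ref{3c7}. (The paper performs the atom step algebraically, via $y'\vee Y'_n \ge x_{b_n}(s) \equiv (y\wedge Y_n)\wedge x_{b_n}(s)=0 \imply y\wedge X_n\wedge x_{b_n}(s)=0$, and does not need your independence remark, since monotonicity of probability already suffices; these differences are cosmetic.)
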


\begin{proof}
By \eqref{3b03}, $ s \in S_{Y_m\vee Y'_n} \equiv Y_m\vee Y'_n \ge
x_{b_n}(s) $. We have to prove that
\[
\Pr{ Y'_m=y \text{ \small and } \forall n > m \;\> Y_m\vee Y'_n \ge
x_{b_n}(s) } = 0
\]
for every $ y \in b_m $ satisfying $ K(y,s)=\infty $. By \ref{3c7},
\[
\Pr{ \forall n > m \;\> X_n \wedge y \wedge x_{b_n}(s) = 0 } = 0 \,
.
\]
It remains to note that $ y' \vee Y'_n \ge x_{b_n}(s) \equiv ( y
\wedge Y_n )' \ge x_{b_n}(s) \equiv ( y \wedge Y_n ) \wedge x_{b_n}(s)
= 0 \imply y \wedge X_n \wedge x_{b_n}(s) = 0 $.
\end{proof}

Now we prove Proposition \ref{3b1}. Applying \eqref{3b02} to $
b_1,b_2,\dots $ and $ p_1,p_2,\dots $ chosen in Sect.~\ref{3c} we have
\[
\exists m \;\; \forall n \quad s \in S_{Y_m\vee Y'_n}
\]
almost surely, for almost all $ s \in S $. In combination with
\ref{3c8} it gives
\[
\Pr{ \exists m \;\> K(Y'_m,s)<\infty } = 1
\]
for almost all $ s \in S $. On the other hand, by \ref{3c6} and
\eqref{3c1},
\[
\Pr{ \exists m \;\> K(Y'_m,s)<\infty } \ne 1
\]
for all $ s \in S $ such that $ K(s)=\infty $. Therefore $ K(s)<\infty
$ for almost all $ s $, which completes the proof of \ref{3b1}.

\subsection{Proving Theorem \ref{3b2}}
\label{3d}

Recall \cite[Sect.~2b]{Ts2}: every measurable set $ E \subset S $
corresponds to a subspace $ H(E) \subset H $ by $
\al(\operatorname{Pr}_{H(E)}) = \One_E $. Additivity holds: $ H ( E_1
\uplus E_2 ) = H(E_1) \oplus H(E_2) $; countable additivity also
holds.

In particular, consider sets $ E_k = \{ s \in S : K(s) = k \} $.

\begin{proposition}\label{3d1}
$ H(E_1) = H^{(1)} $ (the first chaos space defined by
\cite[1a2]{Ts2}).
\end{proposition}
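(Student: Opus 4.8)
The plan is to prove the two inclusions separately, using throughout the additive characterization of the first chaos---$\psi\in H^{(1)}$ iff $Q_{x\vee y}\psi=Q_x\psi+Q_y\psi$ for all $x,y\in B$ with $x\wedge y=0$ (this is \cite[1a2]{Ts2}, and is exactly the property exploited in Section~\ref{1b})---together with the fact, also noted in Section~\ref{1b}, that $H^{(1)}$ is invariant under every $Q_x$ and hence under the whole algebra $\A$. The geometric input is a pointwise dichotomy for the rough spectral filters of \eqref{3b03}: if a finite $b$ contains disjoint $x,y$ and $x_b(s)$ is a \emph{single} atom $a$ of $b$, then from $a\le x\vee y$ and $a\not\le x\wedge y=0$ it follows that $a$ lies below exactly one of $x,y$; via \eqref{3b03} this says $s\in S_{x\vee y}$ while $s$ belongs to exactly one of $S_x,S_y$, i.e.\ $\One_{S_{x\vee y}}(s)=\One_{S_x}(s)+\One_{S_y}(s)$.

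For $H(E_1)\subseteq H^{(1)}$ I would argue as follows. If $s\in E_1$ then $K(s)=1$, so $s\notin S_0$ (as $s\in S_0$ forces $x_b(s)=0$ and $K(s)=0$); consequently $1\le K_b(s)\le K(s)=1$, so $x_b(s)$ is a single atom for \emph{every} finite $b$. Hence the dichotomy applies to any $b\ni x,y$, and $(\One_{S_{x\vee y}}-\One_{S_x}-\One_{S_y})\One_{E_1}=0$ in $L_\infty(S,\Si,\M)$ for all disjoint $x,y\in B$. Applying $\al^{-1}$ and using that $\al^{-1}(\One_{E_1})$ fixes any $\psi\in H(E_1)$ turns this into $(Q_{x\vee y}-Q_x-Q_y)\psi=0$, so $\psi$ is additive, i.e.\ $\psi\in H^{(1)}$.

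For the reverse inclusion I would first note that, since $K<\infty$ almost everywhere (the hypothesis of Theorem~\ref{3b2}), $S=\biguplus_{k\ge0}E_k$ and hence $H=\bigoplus_{k\ge0}H(E_k)$ by countable additivity of $E\mapsto H(E)$. For $\psi\in H^{(1)}$ each component $\psi_k=\operatorname{Pr}_{H(E_k)}\psi$ again lies in $H^{(1)}$, because $\operatorname{Pr}_{H(E_k)}=\al^{-1}(\One_{E_k})\in\A$ and $H^{(1)}$ is $\A$-invariant; so it suffices to show $H^{(1)}\cap H(E_k)=\{0\}$ for $k\neq1$. The case $k=0$ is immediate: $E_0=S_0$, so $\operatorname{Pr}_{H(E_0)}=Q_0$, and additivity at $x=y=0$ gives $Q_0\psi=2Q_0\psi$, whence $\psi_0=Q_0\psi=0$.

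The main obstacle is the case $k\ge2$, where the local splitting must be upgraded to a global vanishing. Choosing $b_1\subset b_2\subset\cdots$ with $K_{b_n}\uparrow K$ as in Section~\ref{3c} and setting $F_n=\{s\in E_k:K_{b_n}(s)=k\}$, one has $F_n\uparrow E_k$, and each $F_n$ is the finite disjoint union of the measurable pieces $F_n^I=\{s:x_{b_n}(s)=\bigvee_{i\in I}a_i\}$ over the $k$-element subsets $I\subset\{1,\dots,N\}$, where $a_1,\dots,a_N$ are the atoms of $b_n$. On a fixed piece pick a nontrivial splitting $I=I_1\uplus I_2$ and put $u=\bigvee_{i\in I_1}a_i$, $v=\bigvee_{i\in I_2}a_i$; then $u\wedge v=0$ and $u\vee v=x_{b_n}(s)$, so $s\in S_{u\vee v}$ but $s\notin S_u,S_v$, giving $(\One_{S_{u\vee v}}-\One_{S_u}-\One_{S_v})\One_{F_n^I}=\One_{F_n^I}$. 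Applying $\operatorname{Pr}_{H(F_n^I)}\in\A$ to the additivity relation $(Q_{u\vee v}-Q_u-Q_v)\phi=0$ for $\phi\in H^{(1)}\cap H(E_k)$ collapses it to $\operatorname{Pr}_{H(F_n^I)}\phi=0$; summing over $I$ gives $\operatorname{Pr}_{H(F_n)}\phi=0$, and letting $n\to\infty$ gives $\phi=\operatorname{Pr}_{H(E_k)}\phi=0$. Thus $H^{(1)}\cap H(E_k)=\{0\}$ for $k\ge2$ as well, every $\psi_k$ with $k\neq1$ vanishes, and $\psi=\psi_1\in H(E_1)$, which with the first inclusion yields $H(E_1)=H^{(1)}$. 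The delicate points will be the measurable bookkeeping (the $F_n^I$ are genuine equivalence classes and only countably many $b_n$ enter, so the cautions of Section~\ref{3a} are respected) and the passage $F_n\uparrow E_k$, which is precisely where the finiteness of $K$ is used.
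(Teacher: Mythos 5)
Your proposal is correct in substance, but it takes a genuinely different route from the paper's for the harder inclusion, and it pays for that with an extra hypothesis. The paper gets both inclusions in one stroke: Lemma \ref{3d4} shows that the condition $ K_b(s) \le 1 $ localizes to the four-element subalgebras $ b_x = \{0,x,x',1\} $, whence (Lemma \ref{3d5}) $ E_1 = \bigcap_{x\in B} \{ s : K_{b_x}(s) = 1 \} $; combining with Lemma \ref{3d3}, which identifies $ H \( \{ s : K_{b_x}(s)=1 \} \) $ with $ \{ \psi : \psi = Q_x\psi + Q_{x'}\psi \} $, gives $ H(E_1) = \{ \psi : \forall x \; \psi = Q_x \psi + Q_{x'} \psi \} = H^{(1)} $ by \cite[1b4]{Ts2}, with no finiteness assumption. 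Your forward inclusion is essentially the ``$\supset$'' half of that computation, phrased pointwise on $ S $ (and your handling of $ S_0 $ and of the per-pair a.e.\ identities is fine). Your reverse inclusion, however, decomposes $ H = \bigoplus_k H(E_k) $ and kills $ H^{(1)} \cap H(E_k) $ for $ k \ne 1 $ by splitting $ x_{b_n}(s) $ into two nonzero disjoint parts $ u, v $ and exploiting $ s \in S_{u\vee v} \setminus (S_u \cup S_v) $; this splitting argument is sound (and anticipates the paper's Lemma \ref{3f1}), but the decomposition $ H = \bigoplus_k H(E_k) $ needs $ S = \biguplus_k E_k $ up to a null set, i.e.\ $ K < \infty $ a.e. Proposition \ref{3d1} is stated and proved unconditionally in the paper, so you should either state the extra assumption explicitly (harmless here, since the proposition and its relativization \ref{3e2} are only used inside the proof of Theorem \ref{3b2}, where $ K(x,\cdot) \le K < \infty $), or note that your own argument already yields the unconditional statement: partitioning $ \{ s : K_{b_n}(s) \ge 2 \} $ by the value of $ x_{b_n}(s) $ and applying the same splitting shows directly that $ \operatorname{Pr}_{H(\{K_{b_n}\ge 2\})} \phi = 0 $ for every additive $ \phi $, and letting $ n \to \infty $ (with $ K_{b_n} \uparrow K $) gives $ H^{(1)} \subset H(\{ s : K(s) \le 1 \}) $ without ever invoking $ K < \infty $.
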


We define chaos spaces $ H^{(k)} = H(E_k) $.

Recall that classicality is defined as the equality $ \si(H^{(1)}) =
\F_1 $ \cite[1a2]{Ts2}.

\begin{proposition}\label{3d2}
$ \si(H^{(k)}) \subset \si(H^{(1)}) $ for all $ k=2,3,\dots $
\end{proposition}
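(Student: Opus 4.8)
The plan is to prove the equivalent inclusion $ H^{(k)} \subset L_2(\si(H^{(1)})) $; once every element of $ H^{(k)} $ is \measurable{\si(H^{(1)})}, the stated $ \si(H^{(k)}) \subset \si(H^{(1)}) $ follows. The guiding idea is the Fock-space picture: a point $ s \in E_k $ carries $ k $ spectral points, and the corresponding part of $ H^{(k)} $ ought to be spanned by products of $ k $ first-chaos elements. Each such product is \measurable{\si(H^{(1)})}, and an $ L_2 $-limit of \measurable{\si(H^{(1)})} functions is again \measurable{\si(H^{(1)})}, so this would finish the proof.

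First I would \emph{separate} the $ k $ points by finite subalgebras. Choose $ b_1 \subset b_2 \subset \dots \subset B $ finite with $ K_{b_n} \uparrow K $. For an atom $ a \in \Atoms(b_n) $ one checks, using monotonicity of $ x_b(s) $ under refinement, that $ K(a,\cdot) \ge 1 $ holds exactly on $ \{ a \le x_{b_n}(\cdot) \} $; hence $ K_{b_n}(s) = |\{ a \in \Atoms(b_n) : K(a,s) \ge 1 \}| $, whereas $ K(s) = \sum_a K(a,s) $ by \eqref{3c25}. On $ E_k $ these coincide (both equal $ k $) precisely when every atom carries at most one point. Since $ K_{b_n} \uparrow k $ is integer-valued on $ E_k $, the sets $ \{ K_{b_n} = k \} \cap E_k $ increase to $ E_k $ almost everywhere, and on them the $ k $ points occupy $ k $ distinct atoms, one apiece. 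By additivity of $ E \mapsto H(E) $, the spaces $ H(\{ K_{b_n}=k \}\cap E_k) $ increase to $ H(E_k) = H^{(k)} $, and each splits as an orthogonal sum, over $ k $-subsets $ \{ a_{i_1},\dots,a_{i_k} \} \subset \Atoms(b_n) $, of the parts over the configuration ``$ a_{i_1},\dots,a_{i_k} $ carry one point each, all other atoms none''. It thus suffices to treat one such part.

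The crux is a \emph{factorization lemma}. The atoms of $ b_n $ are pairwise disjoint with join $ 1 $, hence independent with $ \bigvee_a \F_a = \F_1 $, so the Hilbert space factors, $ H = \bigotimes_a L_2(\F_a) $. Each atom $ a $ carries the induced noise-type Boolean algebra $ \{ x \in B : x \le a \} $ on $ \F_a $, with its own spectrum, its own counting function (which I claim is exactly $ K(a,\cdot) $), and its own first chaos $ H^{(1)}_a \subset L_2(\F_a) $. I would show that, under this tensor factorization, the spectral data are multiplicative --- the type of a global point is the tuple of its types in the factors, $ \al $ respects the factorization, and $ K = \sum_a K(a,\cdot) $ --- so that the part of $ H^{(k)} $ over ``$ a_{i_1},\dots,a_{i_k} $ carry one point each, the rest none'' equals $ H^{(1)}_{a_{i_1}} \otimes \dots \otimes H^{(1)}_{a_{i_k}} $, tensored with the vacuum in the remaining factors. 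Establishing this compatibility of the measure-class spectrum and of $ \al $ with the tensor factorization, together with the identification of the induced counting function with $ K(a,\cdot) $, is the main obstacle; it is the abstract counterpart of the exponential (Fock) property of noises.

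Granting the lemma, the rest is immediate. A single point localized in an atom $ a $ is a single global point: if $ K(a,\cdot)=1 $ and $ K(a',\cdot)=0 $ then $ K=1 $ by \eqref{3c25}, so $ H^{(1)}_a \subset H(E_1) = H^{(1)} $ by Proposition \ref{3d1}. Hence every element of $ H^{(1)}_{a_{i_1}} \otimes \dots \otimes H^{(1)}_{a_{i_k}} $ is an $ L_2 $-limit of linear combinations of products $ \psi_1 \cdots \psi_k $ with $ \psi_j \in H^{(1)}_{a_{i_j}} \subset H^{(1)} $; such a product is \measurable{\si(H^{(1)})}, and so is its limit. Summing over the finitely many $ k $-subsets and letting $ n \to \infty $ yields $ H^{(k)} \subset L_2(\si(H^{(1)})) $, that is $ \si(H^{(k)}) \subset \si(H^{(1)}) $.
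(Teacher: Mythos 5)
Your proposal is correct, and it rests on the same pillars as the paper's own proof --- the tensor factorization of $ H $ along a decomposition of $ 1 $ into independent elements, the compatibility of the spectrum with that factorization (the paper's \eqref{3e1}), and the identification of the induced first chaos $ H_x \( \{ s : K(x,s)=1 \} \) $ with $ H_x \cap H^{(1)} $ (Proposition \ref{3e2}) --- but it is organized differently. The paper splits off one spectral point at a time: Lemma \ref{3f1} and its generalization write $ \{ K = k \} $ as a supremum over $ x \in B $ of $ \{ K(x,\cdot)=k-1, \, K(x',\cdot)=1 \} $, apply the two-fold factorization $ H = H_x \otimes H_{x'} $, and induct on $ k $. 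You instead separate all $ k $ points simultaneously into $ k $ distinct atoms of a sufficiently fine finite subalgebra $ b_n $ (using $ K_{b_n} \uparrow K $ together with integrality --- exactly the mechanism inside the paper's proof of Lemma \ref{3f1}), and then invoke an $ |\Atoms(b_n)| $-fold factorization to exhibit the relevant piece of $ H^{(k)} $ as $ ( H_{a_{i_1}} \cap H^{(1)} ) \otimes \dots \otimes ( H_{a_{i_k}} \cap H^{(1)} ) $ tensored with constants. This buys you an induction-free argument and makes the Fock-space picture explicit, at the price of needing the multi-fold version of \eqref{3e1} (obtained by iterating the two-fold one) and the a.e.\ exhaustion $ \{ K_{b_n}=k \} \cap E_k \uparrow E_k $. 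What you flag as the ``main obstacle'' --- compatibility of the spectrum, of $ \al $, and of the counting function with the factorization --- is precisely the content of the paper's Section \ref{3e}, stated there at about the same level of detail, so nothing essential is missing. One small point worth writing out in a final version: a product $ \psi_1 \cdots \psi_k $ of independent $ L_2 $ factors is again in $ L_2 $ (its second moment factors), so the density-and-limit argument at the end is legitimate.
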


If $ K < \infty $ almost everywhere then $ S = \uplus_k E_k $, thus $
H = \oplus_k H^{(k)} $, therefore $ \F_1 = \bigvee_k \si(H^{(k)}) =
\si(H^{(1)}) $, which means that $ B $ is classical. We see that
Theorem \ref{3b2} follows from Propositions \ref{3d1}, \ref{3d2}.

We denote $ b_x = \{ 0, x, x', 1 \} $ for $ x \in B $.

\begin{lemma}\label{3d3}
For every $ x \in B $,
\[
\{ \psi \in H : \psi = Q_x \psi + Q_{x'} \psi \} = H \( \{ s :
K_{b_x}(s) = 1 \} \) \, .
\]
\end{lemma}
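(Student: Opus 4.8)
The plan is to carry both sides through the spectral isomorphism $\al$ and recognize each as $H(\cdot)$ of the same measurable subset of $S$. First I would evaluate the right-hand side. Assume $x \notin \{0,1\}$, so that $b_x = \{0,x,x',1\}$ has exactly two atoms, $x$ and $x'$ (the cases $x=0$ and $x=1$ yield the same final identity and are checked the same way). By \eqref{3b03} the generator $x_{b_x}(s)$ of the spectral filter satisfies $s \in S_y \equiv x_{b_x}(s) \le y$ for every $y \in b_x$, so it is determined by the two conditions $s \in S_x$ and $s \in S_{x'}$: it equals $0$ when both hold, equals $x$ or $x'$ when exactly one holds, and equals $1$ when neither holds. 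Counting the atoms of $b_x$ lying below $x_{b_x}(s)$ gives $K_{b_x}(s) = 0, 1, 2$ in these three cases, whence
\[
\{ s : K_{b_x}(s) = 1 \} = ( S_x \setminus S_{x'} ) \cup ( S_{x'} \setminus S_x ) = \{ s : \One_{S_x}(s) + \One_{S_{x'}}(s) = 1 \} \, .
\]

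Next I would evaluate the left-hand side. Since $Q_x$ and $Q_{x'}$ are commuting projections in the commutative von Neumann algebra $\A$, the operator $Q_x + Q_{x'}$ is self-adjoint with spectrum contained in $\{0,1,2\}$, and the condition $\psi = Q_x\psi + Q_{x'}\psi$ says precisely that $\psi$ lies in its eigenspace for the eigenvalue $1$. Applying $\al$ yields $\al(Q_x + Q_{x'}) = \One_{S_x} + \One_{S_{x'}}$, so by the functional calculus the spectral projection onto that eigenspace is the element of $\A$ carried by $\al$ to the indicator of the set $\{ s : \One_{S_x}(s) + \One_{S_{x'}}(s) = 1 \}$. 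By the defining relation $\al(\operatorname{Pr}_{H(E)}) = \One_E$, the range of this spectral projection is exactly $H\( \{ s : \One_{S_x}(s) + \One_{S_{x'}}(s) = 1 \} \)$.

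Comparing the two computations, both sides of the asserted identity equal $H$ of the single set $\{ s : \One_{S_x}(s) + \One_{S_{x'}}(s) = 1 \}$, which proves the lemma. The Boolean bookkeeping for $x_{b_x}(s)$ is routine; the step I expect to demand the most care --- the main obstacle --- is the spectral identification of the eigenspace $\{ \psi : (Q_x + Q_{x'})\psi = \psi \}$ with $H$ of a measurable set. It relies on $\al$ being an isomorphism of von Neumann algebras, so that the functional calculus of $Q_x + Q_{x'}$ agrees with that of the function $\One_{S_x} + \One_{S_{x'}}$, together with the definition of $H(\cdot)$ recalled at the start of Sect.~\ref{3d}. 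All set equalities are read modulo null sets, which is harmless here since only the two classes $S_x$ and $S_{x'}$ are involved, in accordance with the conventions of Sect.~\ref{3a}.
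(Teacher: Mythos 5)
Your proof is correct, and its second half takes a genuinely different route from the paper's. The set-theoretic half agrees with the paper: there, too, $ \{ s : K_{b_x}(s) = 1 \} $ is reduced to the set where exactly one of $ s \in S_x $, $ s \in S_{x'} $ holds, written as $ ( S_x \setminus S_0 ) \uplus ( S_{x'} \setminus S_0 ) $ using $ S_x \cap S_{x'} = S_0 $; note that your formula $ \{ s : \One_{S_x}(s) + \One_{S_{x'}}(s) = 1 \} $ in fact covers the degenerate cases $ x \in \{0,1\} $ with no separate treatment, since it then reads $ \{ s : \One_{S_0}(s) + \One_{S}(s) = 1 \} = S \setminus S_0 $, matching $ \{ \psi : Q_0 \psi = 0 \} $. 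The divergence is in matching this set with the left-hand side. The paper applies additivity of $ H(\cdot) $ to the disjoint union, obtains $ ( H_x \ominus H_0 ) \oplus ( H_{x'} \ominus H_0 ) $, and then cites \cite[1b6]{Ts2} for the identification of that orthogonal sum with $ \{ \psi : \psi = Q_x \psi + Q_{x'} \psi \} $. You instead read the left-hand side as the eigenspace of $ Q_x + Q_{x'} $ at eigenvalue $ 1 $, observe that the corresponding spectral projection --- concretely $ (Q_x + Q_{x'})(2\One - Q_x - Q_{x'}) $, by polynomial calculus on a spectrum contained in $ \{0,1,2\} $ --- lies in $ \A $, and push it through $ \al $ to land on the indicator of exactly the set computed above; the definition $ \al(\operatorname{Pr}_{H(E)}) = \One_E $ then finishes. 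This is sound, since $ \al $ is an isomorphism of von Neumann algebras and hence respects the functional calculus. What the paper's route buys is brevity, leaning on the already-established \cite[1b6]{Ts2}; what yours buys is self-containedness, essentially inlining the operator-theoretic content of that citation as a short, explicit spectral computation. Both arguments are valid.
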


\begin{proof}
$ \{ s : K_{b_x}(s) = 1 \} = \{ s : K_{b_x}(s) \le 1 \}
\setminus \{ s : K_{b_x}(s) = 0 \} = ( S_x \cup S_{x'} ) \setminus S_0
= ( S_x \setminus S_0 ) \uplus ( S_{x'} \setminus S_0 ) $ (since $ S_x
\cap S_{x'} = S_0 $), thus $ H \( \{ s : K_{b_x}(s) = 1 \} \) = H (
S_x \setminus S_0 ) \oplus H ( S_{x'} \setminus S_0 ) = ( H_x \ominus
H_0 ) \oplus ( H_{x'} \ominus H_0 ) $; use \cite[1b6]{Ts2}.
\end{proof}

\begin{lemma}\label{3d4}
Assume that $ b_1, b_2 \subset B $ are finite Boolean subalgebras, and
$ b \subset B $ is the (finite) Boolean subalgebra generated by $ b_1,
b_2 $. Then
\[
\{ s : K_{b_1}(s) \le 1 \} \cap \{ s : K_{b_2}(s) \le 1 \} \subset \{
s : K_b(s) \le 1 \} \, .
\]
\end{lemma}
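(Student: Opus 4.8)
The plan is to translate the condition $ K_b(s)\le 1 $ into a statement about the single element $ x_b(s) $, and then to locate $ x_b(s) $ inside $ b $ by comparing the spectral filters of $ b $, $ b_1 $ and $ b_2 $. Recall from \eqref{2a2} that $ x_b(s)=\bigvee_{a\in\Atoms(b),\,a\le x_b(s)} a $, so that $ K_b(s) $ is precisely the number of atoms occurring in this decomposition; hence $ K_b(s)\le 1 $ holds if and only if $ x_b(s) $ is either $ 0 $ or a single atom of $ b $. The analogous equivalence holds for $ b_1 $ and $ b_2 $, so the two hypotheses read $ x_{b_1}(s)\in\{0\}\cup\Atoms(b_1) $ and $ x_{b_2}(s)\in\{0\}\cup\Atoms(b_2) $, and the goal becomes to show $ x_b(s)\in\{0\}\cup\Atoms(b) $.

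First I would establish the monotonicity $ x_b(s)\le x_{b_1}(s) $ and $ x_b(s)\le x_{b_2}(s) $. For $ i\in\{1,2\} $ the element $ x_{b_i}(s) $ lies in $ b_i\subset b $ and, being the generator of the filter $ \{ x\in b_i : s\in S_x \} $, satisfies $ s\in S_{x_{b_i}(s)} $ (apply \eqref{3b03} for $ b_i $ to $ x=x_{b_i}(s) $). Applying the filter characterization \eqref{3b03} now \emph{for $ b $} to this same element $ x_{b_i}(s)\in b $ yields $ x_{b_i}(s)\ge x_b(s) $. Consequently $ x_b(s)\le x_{b_1}(s)\wedge x_{b_2}(s) $.

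It then remains to exploit this bound via the atom calculus of Sect.~\ref{2a}. If $ x_{b_1}(s)=0 $ or $ x_{b_2}(s)=0 $, then $ x_b(s)=0 $ and we are done; otherwise $ x_{b_1}(s)=a_1\in\Atoms(b_1) $ and $ x_{b_2}(s)=a_2\in\Atoms(b_2) $, so $ x_b(s)\le a_1\wedge a_2 $. By the description of the generated algebra recalled in Sect.~\ref{2a}, the meet $ a_1\wedge a_2 $ is either $ 0 $ or an atom of $ b $. In the first case $ x_b(s)=0 $; in the second, $ x_b(s) $ is a join of atoms of $ b $ all lying below the single atom $ a_1\wedge a_2 $, forcing $ x_b(s)\in\{0,\,a_1\wedge a_2\} $. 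Either way $ K_b(s)\le 1 $, as required.

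The argument is short, and the only delicate point is the monotonicity step: one must apply the filter identity \eqref{3b03} at the level of the coarser algebra $ b $ (not $ b_i $) to the element $ x_{b_i}(s) $, which is legitimate precisely because $ b_i\subset b $ makes $ x_{b_i}(s) $ an admissible test element for the $ b $-filter. Once $ x_b(s)\le x_{b_1}(s)\wedge x_{b_2}(s) $ is secured, the conclusion is forced by the elementary fact from Sect.~\ref{2a} that a meet of an atom of $ b_1 $ with an atom of $ b_2 $ is $ 0 $ or an atom of $ b $.
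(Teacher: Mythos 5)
Your proof is correct and follows essentially the same route as the paper's: reduce to the case where each $x_{b_i}(s)$ is an atom, use the filter characterization \eqref{3b03} to get $x_b(s)\le x_{b_1}(s)\wedge x_{b_2}(s)$, and invoke the fact from Sect.~\ref{2a} that a nonzero meet of atoms of $b_1$ and $b_2$ is an atom of $b$. You merely spell out the monotonicity step and the degenerate case $x_{b_i}(s)=0$ (equivalently $s\in S_0$) that the paper leaves implicit.
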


\begin{proof}
If $ K_{b_1}(s) \le 1 $, $ K_{b_2}(s) \le 1 $ and $ s \notin S_0 $
then $ x_{b_1}(s) \in \Atoms(b_1) $, $ x_{b_2}(s) \in \Atoms(b_2) $,
thus $ x_b(s) \le x_{b_1}(s) \wedge x_{b_2}(s) \in \Atoms(b) $ (recall
Sect.~\ref{2a}), therefore $ K_b(s) \le 1 $.
\end{proof}

\begin{lemma}\label{3d5}
\begin{gather*}
\{ s : K(s) \le 1 \} = \bigcap_{x\in B} \{ s : K_{b_x}(s) \le 1 \} \,
; \\
\{ s : K(s) = 1 \} = \bigcap_{x\in B} \{ s : K_{b_x}(s) = 1 \} \,
.
\end{gather*}
\end{lemma}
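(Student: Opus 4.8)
The plan is to prove the two set equalities separately, each splitting into an easy inclusion ``$\subseteq$'' from monotonicity and a harder reverse inclusion. For the first equality, the inclusion $\{s:K(s)\le1\}\subseteq\bigcap_{x\in B}\{s:K_{b_x}(s)\le1\}$ is immediate: each $b_x=\{0,x,x',1\}$ is a finite Boolean subalgebra of $B$, so $K_{b_x}$ is one of the terms in $\sup_b K_b=K$, whence $K_{b_x}\le K$ and $\{K\le1\}\subseteq\{K_{b_x}\le1\}$ for every $x$. In lattice terms this says $\One_{\{K\le1\}}\le\inf_{x\in B}\One_{\{K_{b_x}\le1\}}$.

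The substance is the reverse inclusion: from $K_{b_x}(s)\le1$ for all relevant $x$ I must deduce $K(s)\le1$, i.e.\ $K_b(s)\le1$ for every finite Boolean subalgebra $b\subset B$. I would obtain this by a finite induction built on Lemma \ref{3d4}. Given such a $b$, its atoms (indeed all its elements) are finitely many $x_1,\dots,x_r\in B$ that generate $b$, and $b_{x_i}\subset b$ for each $i$. Setting $\be_1=b_{x_1}$ and letting $\be_i$ be the subalgebra generated by $\be_{i-1}$ and $b_{x_i}$, Sect.~\ref{2a} guarantees each $\be_i$ is finite; Lemma \ref{3d4} propagates $K_{\be_{i-1}}(s)\le1$ together with $K_{b_{x_i}}(s)\le1$ to $K_{\be_i}(s)\le1$; and finally $b\subset\be_r$ gives $K_b(s)\le K_{\be_r}(s)\le1$ by monotonicity.

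The one genuine obstacle is measure-theoretic: $\bigcap_{x\in B}$ runs over an uncountable family while each $K_b$ is only an equivalence class, so I must respect the caution of Sect.~\ref{3a}. I would circumvent this by fixing the increasing sequence $b_1\subset b_2\subset\dots$ with $K_{b_n}\uparrow K$ from Sect.~\ref{3c} and taking the countable set $D=\bigcup_n b_n\subset B$. Since $\inf_{x\in B}\One_{\{K_{b_x}\le1\}}\le\One_{\{K_{b_x}\le1\}}$ for each $x\in D$, it is enough to prove $\bigcap_{x\in D}\{K_{b_x}\le1\}\subseteq\{K\le1\}$, and here the intersection is countable. For a.e.\ $s$ in it, the induction above applied to each $b_n$ (whose generators lie in $D$) uses only countably many instances of Lemma \ref{3d4}, so the accumulated exceptional null set stays null and yields $K_{b_n}(s)\le1$ for all $n$; as $K=\sup_n K_{b_n}$, this gives $K(s)\le1$. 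Combining with the easy inclusion shows $\One_{\{K\le1\}}=\inf_{x\in B}\One_{\{K_{b_x}\le1\}}$, which is the first equality.

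For the second equality I would reduce to the first by stripping off $S_0$. By \eqref{3b03}, since $0\in b_x$, one has $K_{b_x}(s)=0\Leftrightarrow x_{b_x}(s)=0\Leftrightarrow s\in S_0$, independently of $x$; the same reasoning with an arbitrary $b$ gives $\{K=0\}=S_0$ as well. Hence $\{K_{b_x}=1\}=\{K_{b_x}\le1\}\setminus S_0$ and $\{K=1\}=\{K\le1\}\setminus S_0$, and intersecting over $x$ (the common factor $\One_{S\setminus S_0}$ pulls out of the infimum) together with the first equality yields $\bigcap_{x\in B}\{K_{b_x}=1\}=\bigl(\bigcap_{x\in B}\{K_{b_x}\le1\}\bigr)\setminus S_0=\{K\le1\}\setminus S_0=\{K=1\}$. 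Thus the entire difficulty sits in the reverse inclusion of the first equality, and the second is a formal consequence.
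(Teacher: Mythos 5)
Your proposal is correct and follows essentially the same route as the paper: the nontrivial inclusion is obtained by generating each finite subalgebra $b$ from the four-element subalgebras $b_x$, $x\in b$, and iterating Lemma \ref{3d4}, while the second equality reduces to the first via $\{s: K_b(s)=0\}=S_0$ for every $b$. Your additional care with the uncountable intersection (reducing to the countable generating family $\bigcup_n b_n$) is a legitimate elaboration of the convention already set up in Sect.~\ref{3a}, not a different argument.
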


\begin{proof}
Every finite Boolean subalgebra $ b $ is generated by four-element
Boolean subalgebras $ b_x $ for $ x \in b $; by \ref{3d4}, $ \{ s :
K_b(s) \le 1 \} \supset \bigcap_{x\in b} \{ s : K_{b_x}(s) \le 1 \} $;
the intersection over all $ b $ gives $ \{ s : K(s) \le 1 \} \supset
\bigcap_{x\in B} \{ s : K_{b_x}(s) \le 1 \} $. The converse inclusion
being trivial, we get the first equality. The second equality follows,
since the set $ \{ s : K_b(s) = 0 \} $ is equal to $ S_0 $,
irrespective of $ b $.
\end{proof}

\begin{proof}[Proof of Proposition \ref{3d1}]
It follows from the second equality of \ref{3d5} that $ H(E_1) =
\bigcap_x H \( \{ s : K_{b_x}(s) = 1 \} \) $. Using \ref{3d3} we get $
\bigcap_x \{ \psi \in H : \psi = Q_x \psi + Q_{x'} \psi \} $, that is, $
\{ \psi \in H : \forall x \; \psi = Q_x \psi + Q_{x'} \psi \} $. This
set contains $ H^{(1)} $ evidently, and is contained in $ H^{(1)} $ by
\cite[1b4]{Ts2}.
\end{proof}

The proof of Proposition \ref{3d2} is postponed to Sect.~\ref{3f}.

\subsection{More on spectrum and factorization}
\label{3e}

Let $ B $ be a noise-type Boolean algebra of \sif s on a probability
space $ (\Om,\F,P) $, and $ x \in B $. Then the set $ B_x = \{ u \in B
: u \le x \} $ is also a Boolean algebra (and a sublattice of $ B $,
but not a Boolean subalgebra of $ B $). Moreover, $ B_x $ is a
noise-type Boolean algebra of \sif s on the probability space $
(\Om,\F_x,P|_{\F_x}) $. Thus, notions introduced for $ B $ have their
counterparts for $ B_x $, listed below (up to natural isomorphisms):
\[
\begin{matrix}
H = L_2(\F) & H_x = L_2(\F_x) \\
Q_y \text{ for } y \in B & Q_u|_{H_x} \text{ for } u \in B_x \\
\al(\A) = L_\infty(\Si) & \al(\A_x) = L_\infty(\Si_x) \\
H(E) = \al^{-1}(\One_E)H \subset H \;\; \text{\small for $E \in \Si$} \;\;\; &
 H_x(E) = \al^{-1}(\One_E)H_x \subset H_x \;\; \text{\small for $E \in
 \Si_x$} \\
H^{(1)} & Q_x H^{(1)} = H^{(1)} \cap H_x \\
s \mapsto K(s) \;\; \text{\small ($\Si$-measurable)} & s \mapsto
 K(x,s) \;\; \text{\small ($\Si_x$-measurable)}
\end{matrix}
\]

We have $ B = B_x \times B_{x'} $ (the product of Boolean algebras)
up to the natural isomorphism: $ B_x \times B_{x'} \ni (u,v) \mapsto u
\vee v \in B $, that is, $ B \ni y \mapsto ( y \wedge x, y \wedge x' )
\in B_x \times B_{x'} $. Also, $ \F = \F_x \otimes \F_{x'} $ (the
\sif\ generated by two independent sub-\sif s), $ H = H_x \otimes
H_{x'} $, and $ \B(H) = \B(H_x) \otimes \B(H_{x'}) $ (the algebra of
all bounded operators). The algebra $ \B(H_x) $ is naturally embedded
into $ \B(H) $ by $ \B(H_x) \ni T \mapsto T \otimes \One \in \B(H) $,
that is, $ (T\otimes\One)(fg) = (Tf) g $ for $ f \in H_x $, $ g \in
H_{x'} $. In particular, if $ T = Q_u|_{H_x} $ for some $ u \in B_x $
then $ T\otimes\One = Q_{u\vee x'} $ (rather than $ Q_u $), which
motivates the following approach (dual to the approach via $ B_x
\times B_{x'} $).

The set $ B^x = \{ w \in B : w \ge x' \} $ is another Boolean algebra,
naturally isomorphic to $ B_x $: $ B_x \ni u \mapsto u \vee x' \in B^x
$, that is, $ B^x \ni w \mapsto w \wedge x \in B_x $. Accordingly, $ B
= B^x \times B^{x'} $ up to another natural isomorphism: $ B^x \times
B^{x'} \ni (w,z) \mapsto w \wedge z \in B $, that is, $ B \ni y
\mapsto ( y \vee x', y \vee x ) \in B^x \times B^{x'} $. Note that $ 
Q_{w\wedge z} = Q_w Q_z $ (while generally $ Q_{u\vee v} \ne Q_u + Q_v
- Q_u Q_v $). Note also that $ \A_x $ is generated by $ \{ Q_y : x
\vee y = 1 \} = \{ Q_w : w \in B^x \} $ \cite[Sect.~2b]{Ts2}, and $ \A
= \A_x \otimes \A_{x'} $.
For $ E_1 \in \Si_x $, $ E_2 \in \Si_{x'} $ we have $ \al^{-1} (
\One_{E_1\cap E_2} ) = \al^{-1} ( \One_{E_1} \One_{E_2} ) =
\al^{-1}(\One_{E_1}) \al^{-1}(\One_{E_2}) $, thus,
\begin{equation}\label{3e1}
H ( E_1 \cap E_2 ) = H_x (E_1) \otimes H_{x'} (E_2) \quad \text{for }
E_1 \in \Si_x, \, E_2 \in \Si_{x'} \, .
\end{equation}

Results proved for $ B $ have their counterparts for $ B_x $. In
particular, here is the counterpart of Proposition \ref{3d1}.

\begin{proposition}\label{3e2}
$ H_x \( \{ s \in S : K(x,s)=1 \} \) = H_x \cap H^{(1)} $.
\end{proposition}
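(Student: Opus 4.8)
The plan is to read Proposition \ref{3e2} off Proposition \ref{3d1} by applying the latter to the noise-type Boolean algebra $ B_x $ in place of $ B $, using the dictionary of Section \ref{3e}. Under that dictionary the space $ H $ is replaced by $ H_x $, the operators $ Q_y $ (for $ y \in B $) by $ Q_u|_{H_x} $ (for $ u \in B_x $), the spectrum $ (S,\Si,\M) $ by $ (S,\Si_x,\M) $, the subspaces $ H(E) $ by $ H_x(E) $, the first chaos $ H^{(1)} $ by $ Q_x H^{(1)} = H^{(1)} \cap H_x $, and the function $ K(\cdot) $ by $ K(x,\cdot) $. Granting that these correspondences are accurate, the conclusion $ H(E_1) = H^{(1)} $ of Proposition \ref{3d1}, applied to $ B_x $, reads $ H_x(\{ s : K(x,s) = 1 \}) = H_x \cap H^{(1)} $, which is exactly the assertion; and the proof of Proposition \ref{3d1} transfers formally, since Lemmas \ref{3d3}--\ref{3d5} and the identification of the first chaos via \cite[1b4]{Ts2} each have a verbatim $ B_x $-counterpart.

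So the only point that requires genuine verification is the last line of the matrix: that the $ K $-function computed intrinsically in $ B_x $ coincides with $ K(x,\cdot) $ of Section \ref{3c}. I would check this via the factorization $ B = B_x \times B_{x'} $. First note that in $ K(x,\cdot) = \sup_b K_b(x,\cdot) $ it suffices to let $ b $ range over finite subalgebras containing $ x $, since adjoining $ x $ only enlarges $ K_b(x,\cdot) $ and leaves the supremum unchanged. For such a $ b $ we have $ b = (b\cap B_x) \times (b\cap B_{x'}) $, so the atoms of $ b $ lying below $ x $ are precisely the atoms of $ b \cap B_x $. Writing $ y = u \vee v $ with $ u = y\wedge x $, $ v = y\wedge x' $, the decomposition $ B = B^x \times B^{x'} $ gives $ Q_y = Q_{y\vee x'} Q_{y\vee x} = Q_{u\vee x'} Q_{v\vee x} $, hence $ S_y = S_{u\vee x'} \cap S_{v\vee x} $; consequently the spectral filter $ \{ y \in b : s \in S_y \} $ factors into the $ B_x $-filter $ \{ u \in b\cap B_x : s \in S_{u\vee x'} \} $ and the corresponding $ B_{x'} $-filter, and its generator satisfies $ x_b(s) \wedge x = x_{b\cap B_x}(s) $, where the right-hand side is the generator computed intrinsically in $ B_x $. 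Therefore $ K_b(x,s) = | \{ a \in \Atoms(b\cap B_x) : a \le x_{b\cap B_x}(s) \} | $ is the $ B_x $-atom-count, and taking suprema identifies $ K(x,\cdot) $ with the intrinsic $ K $ of $ B_x $.

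The step I expect to be the main obstacle is this spectral bookkeeping, namely confirming that the spectral set of $ u \in B_x $ computed inside $ B_x $ is the restriction to $ \Si_x $ of $ S_{u\vee x'} $, so that the $ B_x $-filter really is $ \{ u : s \in S_{u\vee x'} \} $. This rests on the facts recorded in Section \ref{3e}: that $ \A_x $ is generated by $ \{ Q_w : w \in B^x \} = \{ Q_{u\vee x'} : u \in B_x \} $, that $ \A = \A_x \otimes \A_{x'} $, and that $ Q_{w\wedge z} = Q_w Q_z $; equivalently it is the content of \eqref{3e1}, which yields $ S_y = S_{y\vee x'} \cap S_{y\vee x} $ on taking $ E_1 = S_{y\vee x'} \in \Si_x $ and $ E_2 = S_{y\vee x} \in \Si_{x'} $. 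Once this identification is secured the remaining transfers are mechanical, and Proposition \ref{3e2} follows.
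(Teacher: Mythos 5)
Your proposal matches the paper's approach exactly: the paper offers no separate proof of Proposition \ref{3e2}, presenting it simply as ``the counterpart of Proposition \ref{3d1}'' obtained by applying the dictionary of Section \ref{3e} to $B_x$. Your additional verification that the intrinsic $K$-function of $B_x$ coincides with $K(x,\cdot)$ --- via $S_y = S_{y\vee x'}\cap S_{y\vee x}$ and the factorization of the spectral filter --- is correct and fills in a detail the paper leaves implicit in the last line of its table.
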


\subsection{Proving Proposition \ref{3d2}}
\label{3f}

\begin{lemma}\label{3f1}
$ \{ s \in S : K(s) = 2 \} = \sup_{x\in B} \{ s \in S : K(x,s) =
K(x',s) = 1 \} $ (the supremum of equivalence classes).
\end{lemma}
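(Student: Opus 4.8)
The plan is to establish the two inclusions between equivalence classes of measurable sets separately: the inclusion $\le$ is immediate from additivity, while the reverse inclusion $\ge$ requires reducing the a priori uncountable supremum to a countable union.

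For $\le$, I would fix $x \in B$ and apply the additivity \eqref{3c25} to the disjoint pair $x, x'$, together with the evident identity $K(1,\cdot) = K$ (since $x_b(s) \wedge 1 = x_b(s)$, so $K_b(1,\cdot) = K_b$ for every finite $b$). This gives $K(x,\cdot) + K(x',\cdot) = K$ as equivalence classes. Consequently $\{ s : K(x,s) = K(x',s) = 1 \} \subseteq \{ s : K(s) = 2 \}$ up to a null set, and passing to the supremum over $x \in B$ preserves this bound.

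For $\ge$, the idea is that a single witness $x$ suffices for each $s$ with $K(s) = 2$, and that this witness may always be drawn from a fixed countable set. I would use the increasing sequence of finite Boolean subalgebras $b_1 \subset b_2 \subset \dots$ with $K_{b_n} \uparrow K$ chosen in Sect.~\ref{3c}. After discarding a null set, $K_{b_n}(s) \uparrow K(s)$ pointwise; discarding countably many further null sets, the inequalities $K(a,\cdot) \ge K_{b_n}(a,\cdot)$ hold pointwise for all $a$ in the countable set $\bigcup_n \Atoms(b_n)$. Now let $s$ satisfy $K(s) = 2$. Since the integers $K_{b_n}(s)$ increase to $2$, there is $n$ with $K_{b_n}(s) = 2$, i.e.\ $x_{b_n}(s) = a_1 \vee a_2$ for two distinct atoms $a_1, a_2 \in \Atoms(b_n)$. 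Taking $x = a_1$, a direct count gives $K_{b_n}(a_1,s) = 1$ (the only atom of $b_n$ below $x_{b_n}(s) \wedge a_1 = a_1$ is $a_1$) and $K_{b_n}(a_1',s) = 1$ (the only atom below $x_{b_n}(s) \wedge a_1' = a_2$ is $a_2$). Hence $K(a_1,s) \ge 1$ and $K(a_1',s) \ge 1$; as they sum to $K(s) = 2$, both equal $1$, so $s$ lies in $\{ t : K(a_1,t) = K(a_1',t) = 1 \}$ with $a_1 \in \bigcup_n \Atoms(b_n)$.

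Since $\bigcup_n \Atoms(b_n)$ is countable, every $s$ with $K(s) = 2$ lies in one of the countably many sets $A_a = \{ t : K(a,t) = K(a',t) = 1 \}$, $a \in \bigcup_n \Atoms(b_n)$. Thus their countable union contains $\{ s : K(s) = 2 \}$; combined with $A_a \subseteq \{ s : K(s) = 2 \}$ from the first part, this countable union equals $\{ s : K(s) = 2 \}$ up to a null set, and since it is $\le \sup_{x \in B} A_x$ while $\sup_{x \in B} A_x \le \{ s : K(s) = 2 \}$, all three coincide. I expect the main obstacle to be precisely this last reduction: passing from the pointwise coverage of $\{ s : K(s) = 2 \}$ to a statement about the supremum of equivalence classes over the uncountable family $\{ A_x : x \in B \}$. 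The resolution is that the witness $x$ can always be chosen among the countably many atoms furnished by the approximation $K_{b_n} \uparrow K$, after which only routine finite-Boolean-algebra bookkeeping remains.
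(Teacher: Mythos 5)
Your proof is correct and follows essentially the same route as the paper's: the inclusion $\supset$ via the additivity \eqref{3c25} (with $K(1,\cdot)=K$), and the inclusion $\subset$ by reducing the uncountable supremum to a countable union indexed by the fixed sequence $b_1\subset b_2\subset\dots$ with $K_{b_n}\uparrow K$, then exhibiting for each $s$ with $K(s)=2$ a witness $x$ splitting the two atoms of $x_{b_n}(s)$ and squeezing $K(x,s)=K(x',s)=1$ from $K_{b_n}(x,s)\le K(x,s)=K(s)-K(x',s)\le 2-K_{b_n}(x',s)$. The only (inessential) difference is that you take the witness to be an atom $a_1$ of $b_n$ rather than an arbitrary element of $b_n$ containing exactly one of the two atoms.
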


\begin{proof}
``$\supset$'' follows from \eqref{3c25}; it is sufficient to prove that
$ \{ s \in S : K(s) = 2 \} \subset \cup_{x\in b_1\cup b_2\cup\dots} \{
s \in S : K(x,s) = K(x',s) = 1 \} $ if $ b_1 \subset b_2 \subset \dots
$ satisfy $ K_{b_n} \uparrow K $.

Given $ s $ such that $ K(s)=2 $, we take $ n $ such that $ K_{b_n}(s)
= 2 $, that is, $ x_{b_n}(s) $ contains exactly two atoms of $ b_n
$. We choose $ x \in b_n $ that contains exactly one of these two
atoms; then $ K_{b_n}(x,s) = K_{b_n}(x',s) = 1 $, therefore $ K(x,s) =
K(x',s) = 1 $, since $ 1 = K_{b_n}(x,s) \le K(x,s) = K(s) - K(x',s)
\le 2 - K_{b_n}(x',s) = 1 $.
\end{proof}

\begin{proof}[Proof of Proposition \ref{3d2} for $ k=2 $]
It follows from \ref{3f1} that $ H^{(2)} $ is generated (as a closed
linear subspace of $ H $) by the union, over all $ x \in B $, of the
subspaces $ H \( \{ s \in S : K(x,s) = K(x',s) = 1 \} \) $. In order
to get $ \si(H^{(2)}) \subset \si(H^{(1)}) $ it is sufficient to prove
that
\begin{equation}\label{3f3}
\si \( H \( \{ s \in S : K(x,s) = K(x',s) = 1 \} \) \) \subset
\si(H^{(1)}) \quad \text{for all } x \, .
\end{equation}
By \eqref{3e1} and \ref{3e2}, $ H \( \{ s \in S : K(x,s) = K(x',s) = 1
\} \) = H_x \( \{ s \in S : K(x,s) = 1 \} \) \otimes H_{x'} \( \{ s
\in S : K(x',s) = 1 \} \) = ( H_x \cap H^{(1)} ) \otimes ( H_{x'} \cap
H^{(1)} ) $, which implies \eqref{3f3}.
\end{proof}

The proof of Proposition \ref{3d2} for higher $ k $ is similar. Lemma
\ref{3f1} is generalized to
\[
\{ s \in S : K(s) = k \} = \sup_{x\in B} \{ s \in S : K(x,s) = k-1, \,
K(x',s) = 1 \} \, ,
\]
and \eqref{3f3} --- to
\[
\si \( H \( \{ s \in S : K(x,s) = k-1, \, K(x',s) = 1 \} \) \) \subset
\si(H^{(k-1)} \cup H^{(1)}) \, .
\]
Thus, $ \si(H^{(k)}) \subset \si(H^{(k-1)} \cup H^{(1)}) $. By
induction in $ k $, $ \si(H^{(k)}) \subset \si(H^{(1)}) $, which
completes the proof of Proposition \ref{3d2}, Theorem \ref{3b2},
Propositions \ref{2c5} and \ref{2b3} and finally, Theorem 1.

\bigskip
\filbreak
{
\small
\begin{sc}
\parindent=0pt\baselineskip=12pt
\parbox{4in}{
Boris Tsirelson\\
School of Mathematics\\
Tel Aviv University\\
Tel Aviv 69978, Israel
\smallskip
\par\quad\href{mailto:tsirel@post.tau.ac.il}{\tt
 mailto:tsirel@post.tau.ac.il}
\par\quad\href{http://www.tau.ac.il/~tsirel/}{\tt
 http://www.tau.ac.il/\textasciitilde tsirel/}
}

\end{sc}
}
\filbreak

\end{document}